\documentclass[10pt]{article}

\usepackage[a4paper,margin=1in]{geometry}
\usepackage{amsmath,amssymb,amsthm,mathtools}
\usepackage{microtype}
\usepackage[skip=0.45\baselineskip plus 2pt,indent=0pt]{parskip}
\usepackage{enumitem}
\usepackage{hyperref}
\usepackage{aliascnt}
\usepackage[nameinlink,capitalise]{cleveref}

\hypersetup{
  colorlinks=true,
  linkcolor=blue,
  citecolor=blue,
  urlcolor=blue,
  pdftitle={Forcing Quasirandomness via Rooted F-Densities},
  pdfauthor={Heng Li and Xizhi Liu},
  pdfkeywords={graphons, hypergraphons, rooted densities, quasirandomness, entropy, Hoeffding decomposition, tournamentons}
}

\numberwithin{equation}{section}

\newtheorem{theorem}{Theorem}[section]
\newaliascnt{proposition}{theorem}
\newtheorem{proposition}[proposition]{Proposition}
\aliascntresetthe{proposition}
\newaliascnt{corollary}{theorem}
\newtheorem{corollary}[corollary]{Corollary}
\aliascntresetthe{corollary}
\newaliascnt{lemma}{theorem}
\newtheorem{lemma}[lemma]{Lemma}
\aliascntresetthe{lemma}

\crefname{theorem}{Theorem}{Theorems}
\Crefname{theorem}{Theorem}{Theorems}
\crefname{proposition}{Proposition}{Propositions}
\Crefname{proposition}{Proposition}{Propositions}
\crefname{corollary}{Corollary}{Corollaries}
\Crefname{corollary}{Corollary}{Corollaries}
\crefname{lemma}{Lemma}{Lemmas}
\Crefname{lemma}{Lemma}{Lemmas}

\newcommand{\bbE}{\mathbb E}
\newcommand{\R}{\mathbb R}
\newcommand{\calA}{\mathcal A}
\newcommand{\norm}[1]{\left\lVert #1\right\rVert}
\newcommand{\Ent}{\mathrm D}
\DeclareMathOperator{\Var}{Var}

\title{\Large\bf Forcing quasirandomness via rooted $F$-densities}
\author{%
Heng~Li\thanks{\scriptsize School of Mathematics, Shandong University, Jinan, China, and Extremal Combinatorics and Probability Group, Institute for Basic Science, Daejeon, South Korea. Email:~\texttt{heng.li@sdu.edu.cn}.}\and
Xizhi~Liu\thanks{\scriptsize School of Mathematical Sciences, University of Science and Technology of China, Hefei, China. Email:~\texttt{liuxizhi@ustc.edu.cn}.}}
\date{\today}

\begin{document}

\maketitle

\begin{abstract}
Let $F$ be a finite graph with at least one edge, and let $W$ be a graphon. We show that if the density of $F$ rooted at each edge is almost everywhere constant, then either $t(F,W)=0$ or $W$ is constant. For edge-transitive $F$, one rooted equation suffices. This recovers the edge-rooted triangle theorem of Reiher and Schacht. In their terminology, our result also shows that every clique is $2$-forcing, answering a question they posed. We give an explicit stability estimate when $W$ is bounded away from zero. Our proof has two steps: an entropy argument turns constant rooted densities into an additive identity for $\log W$, and a Hoeffding decomposition determines all solutions of that identity. The same method gives exact classifications and quantitative stability estimates for symmetric uniform hyperkernels, dissociated Aldous--Hoover hypergraphons, directed kernels, and tournamentons.
\end{abstract}

\noindent\textbf{Keywords.} Graphons; hypergraphons; rooted subgraph densities; quasirandomness; entropy; Hoeffding decomposition; directed kernels; tournamentons.\par\smallskip
\noindent\textbf{MSC 2020.} 05C80, 05C20, 05C35, 05C60, 05C65, 60G09.

\section{Introduction}

Many quasirandomness results say that a small collection of subgraph-density identities has only constant graphon solutions. The theorem of Chung, Graham, and Wilson~\cite{CGW} is the classical example. Lov\'asz and Szegedy~\cite{LovaszSzegedy2006} introduced the graph-limit language in which these questions become rigidity problems for measurable functions. Borgs, Chayes, Lov\'asz, S\'os, and Vesztergombi~\cite{BCLSV1,BCLSV2} developed the metric theory, while Diaconis and Janson~\cite{DiaconisJanson} related graph limits to exchangeable random graphs. Lov\'asz's monograph~\cite{LovaszBook} gives a broad account of this theory. Closely related results include Janson's graphon formulation of quasirandomness~\cite{JansonQR}, the generalized quasirandom graphs of Lov\'asz and S\'os~\cite{LovaszSos}, and the finitely forcible graphons of Lov\'asz and Szegedy~\cite{LovaszSzegedyFF}.

We study a rooted form of this question. A \emph{graphon} is a symmetric measurable function $W\colon[0,1]^2\to[0,1]$. If $F=(V,E)$ is a finite graph, its \emph{homomorphism density} in $W$ is
\begin{equation}\label{eq:tFW-intro}
  t(F,W)\coloneqq\int_{[0,1]^V}\prod_{ij\in E}W(x_i,x_j)\prod_{i\in V}dx_i.
\end{equation}
For an edge $e=ab\in E(F)$, retain the variables at $a$ and $b$ and integrate the other vertex variables. The resulting \emph{edge-rooted density} is
\begin{equation}\label{eq:edge-rooted-intro}
  r_e^W(x,y)\coloneqq\int_{[0,1]^{V(F)\setminus\{a,b\}}}
  \prod_{ij\in E(F)}W(z_i,z_j)
  \prod_{i\in V(F)\setminus\{a,b\}}dz_i,
\end{equation}
where $z_a=x$ and $z_b=y$.

The motivating example is Problem~28 in Lov\'asz's 2008 list of problems on graph homomorphisms~\cite{LovaszProblems}. It asks for a characterization of the graphons for which the density of a triangle rooted at two adjacent vertices is constant. In the notation above, the equation is
\begin{equation}\label{eq:P28}
  W(x,y)\int_0^1W(x,z)W(y,z)\,dz=\textup{constant}
\end{equation}
for almost every $(x,y)$. For a positive constant, this equation forces $W$ to be constant. When the constant is zero, nonnegativity gives $t(K_3,W)=0$. Thus the full solution set consists of the triangle-free graphons and the constant graphons. Our first result extends this classification from the triangle to every finite graph when all edges are rooted.

\begin{theorem}\label{thm:graphon-exact-intro}
Let $F$ be a finite graph with $m=|E(F)|\ge1$, and let $W$ be a graphon. If $r_e^W$ is constant for every $e\in E(F)$, then either $t(F,W)=0$ or $W=p$ almost everywhere, where $p\coloneqq t(F,W)^{1/m}$.
\end{theorem}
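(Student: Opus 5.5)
We follow the two-step strategy announced in the abstract: an entropy argument converts the rooted equations into an additive identity for $\log W$, and a Hoeffding decomposition solves that identity.

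\textbf{Setup.} Suppose $t\coloneqq t(F,W)>0$ and $r_e^W\equiv c_e$ for every $e\in E(F)$. Integrating $r_e^W$ over $(x,y)$ gives $c_e=t$ for all $e$. Let $\mu$ be the probability measure on $[0,1]^V$ with density $\prod_{ij\in E}W(x_i,x_j)/t$, the ``planted'' copy of $F$. For each edge $e=ab$, the marginal of $\mu$ on $(x_a,x_b)$ has density $r_e^W(x_a,x_b)/t=1$. So every edge marginal of $\mu$ is uniform on $[0,1]^2$. This also shows $W>0$ almost everywhere: otherwise the edge marginal would vanish on a set of positive measure. Hence $\log W$ is finite almost everywhere.

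\textbf{Entropy step.} Compare $\mu$ with Lebesgue measure $\lambda$ on $[0,1]^V$ through relative entropy:
\[
\Ent(\mu\|\lambda)=\int\log\frac{d\mu}{d\lambda}\,d\mu=\sum_{e\in E}\int\log W\,d\mu-\log t .
\]
Since each edge marginal of $\mu$ is uniform, $\int\log W(x_a,x_b)\,d\mu=\int_{[0,1]^2}\log W$. Therefore
\[
\Ent(\mu\|\lambda)=m\int\log W-\log t .
\]
By Jensen's inequality, $\int\log W\le\log\int W$. By Sidorenko-free reasoning we only need something weaker. Jensen applied to $t=\int\prod W$ gives $\log t\ge\sum_e\int\log W=m\int\log W$. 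Thus $\Ent(\mu\|\lambda)\le0$, so $\Ent(\mu\|\lambda)=0$, and $\mu=\lambda$. Equality means
\[
\sum_{ij\in E}\log W(x_i,x_j)=\log t \quad\text{for almost every } x\in[0,1]^V .
\]
This is the additive identity. To handle integrability of $\log W$ near $0$, we first note that $\int\log W>-\infty$, since otherwise $\Ent<0$ is impossible. Care is needed here, and we work with the truncations $\log\max(W,\varepsilon)$ if necessary.

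\textbf{Hoeffding step.} Set $g=\log W-\int\log W$, which is symmetric and in $L^1$; we aim to show it lies in $L^2$ after the argument. Decompose $g(x,y)=h(x)+h(y)+g_2(x,y)$, where $h(x)=\int g(x,y)\,dy$ and $g_2$ is degenerate, meaning its integral in either variable vanishes. The identity becomes
\[
\sum_{ij\in E}g(x_i,x_j)=0 \quad\text{almost everywhere.}
\]
Fix an edge $ab$ and integrate out all variables other than $x_a,x_b$. Every other edge term loses its degenerate part. A term $h(x_i)$ survives only when $i\in\{a,b\}$. We are left with $g_2(x_a,x_b)+d_a h(x_a)+d_b h(x_b)=0$, where $d_v$ is the degree of $v$. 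Integrating further in $x_b$ gives $d_a h(x_a)=0$. Hence $h=0$ whenever $F$ has an edge at that vertex, and then $g_2=0$. So $g=0$, meaning $\log W$ is constant almost everywhere. Then $W=p$ with $p^m=t$.

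\textbf{Main obstacle.} The main obstacle is integrability. The entropy identity requires $\log W\in L^1$ and the Fubini manipulations in the Hoeffding step. Only one-sided bounds are available, since $\log W\le0$. We handle this by first proving $\int\log W>-\infty$ from the finiteness of $\Ent(\mu\|\lambda)$, which equals $0$ once we know $\mu=\lambda$. Alternatively, we can run the argument through conditional expectations directly.
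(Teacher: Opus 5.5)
Your proof is correct. The entropy half matches the paper: same Gibbs measure, the identity $\Ent(\mu\Vert\lambda)=m\int\log W-\log t$ from uniform edge marginals, and Jensen for the reverse inequality. You read off $\prod_{ij\in E}W(x_i,x_j)=t$ from the equality case $\mu=\lambda$, the paper from strict convexity; these amount to the same thing.

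The integrability issue you call the main obstacle disappears with the one-line observation the paper uses. Every factor in the rooted integral other than the root is at most $1$, so $t=r_e^W(x,y)\le W(x,y)$ almost everywhere. Hence $W\ge t$ and $\log W$ is bounded. Your own workaround is also valid, but its justification is this: $\rho\coloneqq d\mu/d\lambda\le 1/t$ and each $\rho\log W(x_a,x_b)\le0$. Therefore $\int\rho\log\rho=\sum_e\int\rho\log W(x_a,x_b)-\log t$ holds in $[-\infty,\infty)$, and $\Ent\ge0$ forces $\int\log W>-\infty$. Your closing remark deriving finiteness from $\Ent=0$ ``once we know $\mu=\lambda$'' is circular as phrased, and the truncation is unnecessary.

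The genuine difference is the second step. In its notation, the paper computes the full variance $\Var(S)=(\sum_v d_F(v)^2)\norm{f}_2^2+m\norm{g}_2^2$. This uses $L^2$ and the pairwise orthogonality of all edge terms, and it yields $\norm{u-\bar u}_2^2\le\frac1m\Var(S)$.

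You instead condition on the two coordinates of a single edge $ab$. This kills every $h(x_v)$ with $v\notin\{a,b\}$, and every degenerate term of another edge, since such an edge has an endpoint outside $\{a,b\}$. Your version needs only $\log W\in L^1$ and is shorter for the exact theorem. What it gives up is quantitative strength. Conditional expectation is an $L^2$ contraction, so your route yields only $\norm{u-\bar u}_2^2\le\Var(S)$. It loses the factor $1/m$ that the paper's variance identity supplies for the stability estimate in \Cref{thm:graph-stability}.
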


The zero-density alternative is unavoidable. For example, every triangle-free graphon makes the expression in \eqref{eq:P28} identically zero. At positive density the conclusion is rigid. If $F$ is \emph{edge-transitive}, meaning that its automorphism group acts transitively on $E(F)$, then a single rooted equation propagates to every edge by automorphisms. Thus, for $F=K_3$, \Cref{thm:graphon-exact-intro} recovers Theorem~4.2 of Reiher and Schacht~\cite{ReiherSchacht} and also includes the immediate zero-density case.

We next state precisely the question that Reiher and Schacht posed. Let $(G_n)$ be a sequence of graphs with $N_n\coloneqq|V(G_n)|\to\infty$. For subsets $X_1,\ldots,X_\ell\subseteq V(G_n)$, let $K_k(X_1,\ldots,X_\ell;G_n)$ be the number of ordered injective $k$-tuples $(v_1,\ldots,v_k)$ that span a clique and satisfy $v_i\in X_i$ for $1\le i\le\ell$. The clique $K_k$ is \emph{$\ell$-forcing} if, for every $p\in(0,1]$, the condition
\[
  \sup_{X_1,\ldots,X_\ell\subseteq V(G_n)}
  \left|
    K_k(X_1,\ldots,X_\ell;G_n)
    -p^{\binom{k}{2}}N_n^{k-\ell}\prod_{i=1}^{\ell}|X_i|
  \right|
  =o(N_n^k)
\]
implies that $(G_n)$ is \emph{$p$-quasirandom}. We use the equivalent two-set formulation
\[
  \sup_{X,Y\subseteq V(G_n)}
  \left|e_{G_n}(X,Y)-p|X||Y|\right|
  =o(N_n^2),
\]
where $e_{G_n}(X,Y)$ counts ordered adjacent pairs in $X\times Y$. At the graphon level, the case $\ell=2$ says that a positive constant $K_k$-density rooted at one edge forces the graphon to be constant. Since $K_k$ is edge-transitive, \Cref{thm:graphon-exact-intro} shows that every clique is $2$-forcing. Reiher and Schacht~\cite[Section~5]{ReiherSchacht} observed that no clique is $1$-forcing and asked for the minimum $\ell$ when $k\ge4$. They also noted that $\ell=\lceil(k+1)/2\rceil$ suffices, a bound obtained independently by Hubai, Kr\'al', Parczyk, and Person~\cite{HubaiKralParczykPerson}. Thus the answer is $\ell=2$ for every $k\ge4$; see \Cref{cor:clique-forcing} for the compactness argument.

The $\ell$-forcing notion above is different from ordinary forcing. An ordinary forcing pair prescribes finitely many global numbers $t(F_i,W)$, whereas a rooted equation prescribes a two-variable function almost everywhere. Hereditary quasirandomness instead asks for the correct number of copies of a fixed graph inside every vertex subset. Simonovits and S\'os~\cite{SimonovitsSosHereditary} developed this theory, and Conlon, Fox, and Sudakov~\cite{ConlonFoxSudakovHereditary} later gave a quantitative proof without the regularity lemma. Grzesik, Kr\'al', and Pikhurko~\cite{GrzesikKralPikhurko} use rooted graphs and rooted quantum graphs to force more general quasirandom structures. Those settings are adjacent to ours, but none is the same as requiring a pointwise two-variable identity for an edge-rooted density. Moreover, an all-edge rooted system is generally stronger than a single rooted equation. The present paper solves the all-edge problem and its edge-transitive one-edge consequences; it does not claim a classification of arbitrary single-edge rooted pairs or ordinary forcing pairs.

The proof also gives a quantitative form. When $W$ is bounded away from zero, approximate constancy of all normalized rooted densities forces $\log W$ to be close in $L^2$ to a constant. The lower bound on $W$ keeps $\log W$ uniformly bounded. It is not needed in the exact theorem because positive constant rooted densities themselves imply $W\ge t(F,W)$ almost everywhere. We give the precise estimate in \Cref{thm:graph-stability}.

The same method applies to uniform hypergraphs. A symmetric $k$-uniform \emph{hyperkernel} is a symmetric measurable function $K\colon[0,1]^k\to[0,1]$. We use the \emph{dissociated}, or \emph{ergodic}, Aldous--Hoover representation associated with a deterministic dense hypergraph limit. In this convention the global mixing coordinate is absent, and the independent Bernoulli coordinate indexed by the full edge has been integrated out. Thus a $k$-uniform \emph{hypergraphon} is the conditional edge probability given the coordinates indexed by the nonempty proper subsets of an edge, represented by an $S_k$-invariant measurable function of the form
\[
  W\colon[0,1]^{\calA_k}\to[0,1],
  \qquad\text{where}\qquad
  \calA_k\coloneqq\{A\colon\emptyset\ne A\subsetneq[k]\}.
\]
This coordinate system comes from the representation theory of exchangeable arrays developed by Aldous~\cite{Aldous1981} and Kallenberg~\cite{Kallenberg2005}. Austin~\cite{AustinExchangeability} explains its connection with large graphs and hypergraphs. In dense hypergraph limit theory, related coordinate models are used by Elek and Szegedy~\cite{ElekSzegedyHypergraph} and by Zhao~\cite{ZhaoHypergraphLimits}. One local coordinate is attached to each nonempty proper subset of an edge. To recover rigidity in this model, we root all these local coordinates, not only the vertex coordinates.

For a hyperkernel, $t(H,K)$ and $r_e^K$ are defined by the same product integrals as in \eqref{eq:tFW-intro} and \eqref{eq:edge-rooted-intro}, with one factor $K((x_v)_{v\in e})$ for each hyperedge. More explicitly, if $H=(V,E)$, then
\[
  t(H,K)
  \coloneqq
  \int_{[0,1]^V}
  \prod_{f\in E}K(\mathbf x_f)
  \prod_{v\in V}dx_v,
\]
and, for $e\in E$,
\[
  r_e^K(\mathbf x_e)
  \coloneqq
  \int_{[0,1]^{V\setminus e}}
  \prod_{f\in E}K(\mathbf z_f)
  \prod_{v\in V\setminus e}dz_v,
\]
where $\mathbf x_f\coloneqq(x_v)_{v\in f}$ and $\mathbf z_f\coloneqq(z_v)_{v\in f}$. In the second formula, $z_v=x_v$ for $v\in e$, while the variables $z_v$ with $v\notin e$ are integrated out. The order of the coordinates is irrelevant because $K$ is symmetric. For a hypergraphon, the \emph{full-edge-rooted density} fixes every local coordinate indexed by a nonempty proper subset of the rooted edge. Its precise formula is given in \Cref{sec:hypergraph}.

\noindent\emph{Remark.}
In the hypergraphon model, rooting only the vertices of a hyperedge is insufficient. For $k=3$, choose $p\in(0,1)$ and a bounded nonconstant function $f\in L^\infty([0,1])$ with $\int f=0$. For every sufficiently small $\varepsilon>0$, let
\[
  W\coloneqq p+\varepsilon(f(x_{12})+f(x_{13})+f(x_{23})),
\]
so that $0\le W\le1$. If one fixes only the three vertex variables and integrates the pair variables, the vertex-rooted density of the one-edge $3$-uniform hypergraph is identically $p$, even though $W$ is not constant. Full-edge rooting retains the pair coordinates and detects this example.

The same rigidity statement holds in both hypergraph models.

\begin{theorem}\label{thm:hypergraph-intro}
Let $H$ be a finite $k$-uniform hypergraph with $k\ge2$ and $m=|E(H)|\ge1$.
\begin{enumerate}[label=\textup{(\roman*)}]
  \item If $K\colon[0,1]^k\to[0,1]$ is a symmetric $k$-uniform hyperkernel and $r_e^K$ is constant for every $e\in E(H)$, then either $t(H,K)=0$ or $K=p$ almost everywhere, where $p\coloneqq t(H,K)^{1/m}$.
  \item In the dissociated Aldous--Hoover representation above, if every full-edge-rooted density of $H$ in an $S_k$-invariant $k$-uniform hypergraphon $W$ is constant, then either $t(H,W)=0$ or $W=p$ almost everywhere, where $p\coloneqq t(H,W)^{1/m}$.
\end{enumerate}
\end{theorem}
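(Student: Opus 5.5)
We follow the two-step scheme from the abstract: an entropy argument turns constant rooted densities into an additive identity for $\log K$ (resp.\ $\log W$), and a Hoeffding decomposition shows that identity has only constant solutions. Part (i) is the model case; part (ii) is the same argument with one coordinate for each pair $(f,A)$, $f\in E(H)$, $A\in\calA_k$.

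\emph{Step 1 (reduction to positivity).} Suppose $r_e^K\equiv c_e$. Integrating over $\mathbf x_e$ gives $c_e=t(H,K)$ for every $e$. If $t(H,K)=0$ we are done, so assume $t\coloneqq t(H,K)>0$. Since $r_e^K(\mathbf x_e)\le K(\mathbf x_e)$, we get $K\ge t>0$ almost everywhere. Hence $\log K$ is bounded and every logarithm below is finite.

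\emph{Step 2 (entropy identity).} Let $\mu$ be the probability measure on $[0,1]^V$ with density $\prod_{f}K(\mathbf x_f)/t$. The hypothesis says that for every $e$ the marginal of $\mu$ on the coordinates of $e$ has density $K(\mathbf x_e)/t$ with respect to Lebesgue measure.
\begin{itemize}
\item Compute $\Ent(\mu\,\|\,\mathrm{Leb})=\bbE_\mu\bigl[\sum_f\log K(\mathbf x_f)\bigr]-\log t$. Because all edge-marginals agree, this equals $\sum_f \bbE_{\nu}[\log K]-\log t$, where $d\nu=(K/t)\,d\mathbf x$ on $[0,1]^k$.
\item Compare this with the entropy of the product-type reference measure. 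Jensen gives $\bbE_\nu[\log K]\ge \log \bbE_\nu[K]\ge \log\bigl(\int K^2/t\bigr)$, but that is not directly helpful. The cleaner comparison is the Gibbs variational principle: $\log t=\log\int e^{\sum_f\log K}\ge \bbE_\lambda[\sum_f\log K]$ for Lebesgue $\lambda$, with equality iff $\sum_f\log K(\mathbf x_f)$ is a.e.\ constant.
\end{itemize}
To force equality, use the rooted hypothesis in a second way. The rooted density $r_e=t$ is a partition function with $\mathbf x_e$ fixed. Applying Jensen inside it gives
\[
\log t = \log K(\mathbf x_e)+\log\int \prod_{f\ne e}K \ \ge\ \log K(\mathbf x_e)+\bbE_{\text{rest}}\Bigl[\sum_{f\ne e}\log K\Bigr].
\]
Integrate this over $\mathbf x_e$ against Lebesgue measure and then against $\nu$, and sum over $e$ suitably. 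Doing so should squeeze the inequalities to equalities. The conclusion I aim for is the additive identity
\[
\sum_{f\in E(H)} \log K(\mathbf x_f)=\log t \quad\text{a.e. on }[0,1]^V.
\]
Equivalently, $\mu$ is Lebesgue measure itself. I expect this step to be the main obstacle. Concretely, I would try to show that $\Ent(\mu\|\mathrm{Leb})\le 0$ via a Shearer-type subadditivity over the edge-marginals, which are known exactly. Nonnegativity of relative entropy then gives $\mu=\mathrm{Leb}$.

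\emph{Step 3 (Hoeffding decomposition).} Let $g=\log K-\bbE\log K$. This function is bounded, symmetric, and satisfies $\sum_f g(\mathbf x_f)=0$ a.e. Write the Hoeffding decomposition $g=\sum_{\emptyset\ne S\subseteq[k]} g_S$, where the components are mutually orthogonal and degenerate. Then project the identity onto the component indexed by the variable set $f_0$ for a fixed edge $f_0$. Only the top component of $g(\mathbf x_{f_0})$ lives on exactly that set; no other edge contributes there, since a distinct edge differs as a $k$-set. Hence $g_{[k]}=0$.

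For lower levels, projecting onto a set $S$ with $|S|=j$ gives $\sum_{f\supseteq S} g_{S}(\mathbf x_S)=0$. The sum is nonzero as long as some edge contains $S$, so $g_S=0$. Every $S$ inside an edge is covered, so $g=0$ and $K$ is a.e.\ constant, equal to $t^{1/m}=p$.

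For part (ii), the coordinates are indexed by the local sets $(f,A)$, identified across edges along shared vertex sets. Fully rooting fixes all coordinates of $e$, so Steps 1--2 go through verbatim. Step 3 uses the Hoeffding decomposition over the coordinate family; distinct edges again share only lower-order coordinates, and the same induction kills every component.
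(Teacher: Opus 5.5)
\textbf{There is a genuine gap in Step 2, which is the heart of the proof.} You never obtain the reverse inequality $\log t\le m\bar u$, where $\bar u\coloneqq\int\log K$, and the routes you suggest cannot supply it.

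The cause is a misidentification of the edge marginals of $\mu$. The marginal of $\mu$ on $\mathbf x_e$ has density $r_e^K(\mathbf x_e)/t$; the rooted density already contains the factor $K(\mathbf x_e)$, as you used in Step 1. So the hypothesis $r_e^K\equiv t$ says exactly that every edge marginal is Lebesgue measure. It is not $(K/t)\,d\mathbf x$, which in general is not even a probability measure.

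With this correction your first bullet closes the step at once, because $\log(d\mu/d\lambda)=\sum_f\log K(\mathbf x_f)-\log t$ is a sum of edge functions:
\[
  0\le\Ent(\mu\Vert\lambda)=\sum_{f\in E(H)}\int_{[0,1]^k}\log K-\log t=m\bar u-\log t .
\]
Jensen gives $\log t\ge m\bar u$, so equality holds. Strict convexity then yields $\sum_f\log K(\mathbf x_f)=\log t$ almost everywhere. This is the paper's argument.

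Your alternative routes do not work:
\begin{itemize}
  \item Applying Jensen inside the rooted integral and integrating over $\mathbf x_e$ only reproduces $\log t\ge m\bar u$, the same direction as before.
  \item A Shearer-type bound controls the joint entropy from above by the edge-marginal entropies. With uniform edge marginals it therefore gives $\Ent(\mu\Vert\lambda)\ge0$, which is the wrong direction.
\end{itemize}
In fact no argument using the edge marginals alone can succeed. On a triangle, the density $1+\tfrac12\sin(2\pi x_1)\sin(2\pi x_2)\sin(2\pi x_3)$ has all three pair marginals uniform but strictly positive relative entropy. The missing inequality must come from the product (Gibbs) form of $d\mu/d\lambda$.

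\textbf{Step 3 for part (i) is correct.} It is the exact (uniqueness) version of the paper's variance identity $\Var(S_H(u))=\sum_{s}\sum_{|B|=s}c_B^2\norm{u_s}_2^2$. It uses the symmetry of $K$ to identify the components that different edges contribute on a common vertex set $B$; you should say so explicitly.

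\textbf{For part (ii), your sketch skips the one nontrivial point.} Pairs $(e,R)$ and $(f,R')$ with $e\ne f$ can lift to the same global coordinate family, $\eta_e(R)=\eta_f(R')$. You must show that the lifted components then coincide, so that they add with multiplicity $n_G\ge1$ rather than possibly cancel. The paper proves this in its full-edge Hoeffding lemma, using the $S_k$-invariance of $\log W$ and the equivariance of Hoeffding components under relabelling. Appealing to ``the same induction'' does not address it.
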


A \emph{directed kernel} is an arbitrary measurable function $W\colon[0,1]^2\to[0,1]$. We use the one-function homomorphism-density model, which is the direct analogue of the graphon equations above but is not the most general digraphon model. Directed analogues of the usual Sidorenko and forcing problems were studied by Fox, Himwich, Mani, and Zhou~\cite{FoxHimwichManiZhou}; their conditions prescribe global homomorphism densities, whereas ours retains the variables on each rooted arc. Here symmetry is absent, and Eulerian oriented graphs lead to a family of nonconstant solutions.

For an oriented graph $D=(V,A)$, define $t(D,W)$ by the product integral in \eqref{eq:tFW-intro}, using the ordered factor $W(x_i,x_j)$ for every arc $(i,j)\in A$. Explicitly,
\[
  t(D,W)
  \coloneqq
  \int_{[0,1]^V}
  \prod_{(s,t)\in A}W(x_s,x_t)
  \prod_{v\in V}dx_v.
\]
For an arc $a=(i,j)$, the \emph{arc-rooted density} $r_a^W(x,y)$ is defined by fixing $x_i=x$ and $x_j=y$ and integrating the remaining vertex variables. We call $D$ \emph{Eulerian} if every non-isolated vertex has equal indegree and outdegree.

The absence of symmetry leads to the following classification.

\begin{theorem}\label{thm:directed-intro}
Let $D=(V,A)$ be a finite oriented graph with $m=|A|\ge1$, and let $W\colon[0,1]^2\to[0,1]$ be a directed kernel.  Suppose that $r_a^W$ is constant for every arc $a\in A$ and that $t(D,W)>0$.
\begin{enumerate}[label=\textup{(\roman*)}]
  \item If $D$ is not Eulerian, then $W=p$ almost everywhere, where $p\coloneqq t(D,W)^{1/m}$.
  \item If $D$ is Eulerian, then
  \[
    W(x,y)=p\exp(\phi(x)-\phi(y))
  \]
  for $p\coloneqq t(D,W)^{1/m}\in(0,1]$ and some bounded measurable $\phi$.
\end{enumerate}
Conversely, every kernel of this form with $0\le W\le1$ has every arc-rooted density of every Eulerian $D$ equal to $p^m$.
\end{theorem}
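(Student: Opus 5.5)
The proof has two steps. First, an entropy argument converts the rooted equations into the pointwise identity $\prod_{(i,j)\in A}W(x_i,x_j)=t(D,W)$ almost everywhere. Second, a Hoeffding decomposition of $\log W$ solves the resulting additive equation.

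\emph{Preliminaries.} Write $T\coloneqq t(D,W)>0$ and let $c_a$ be the constant value of $r_a^W$. Integrating $r_a^W$ over $(x,y)$ gives $c_a=T$ for every arc $a$. For an arc $a=(i,j)$, all factors other than $W(x_i,x_j)$ lie in $[0,1]$. Hence $T=r_a^W(x,y)\le W(x,y)$ almost everywhere, which gives $W\ge T$ a.e. So $g\coloneqq\log W$ is bounded, with values in $[\log T,0]$, and every expression below is integrable. We may assume $D$ has no isolated vertices, since they contribute nothing.

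\emph{Entropy step.} Let $\mu$ be the probability measure on $[0,1]^V$ with density $\prod_{a}W(x_i,x_j)/T$ with respect to Lebesgue measure $\lambda$. The marginal of $\mu$ on the coordinates $(x_i,x_j)$ of an arc $a=(i,j)$ has density $r_a^W/T\equiv1$, so it is uniform. Therefore
\[
\Ent(\mu\Vert\lambda)=\int\log\Bigl(\tfrac{1}{T}\prod_a W(x_i,x_j)\Bigr)\,d\mu=\sum_a\int_{[0,1]^2} g\,dx\,dy-\log T=m\int g-\log T.
\]
Jensen's inequality gives $\log T=\log\int\prod_aW\ge\sum_a\int g=m\int g$. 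Combined with $\Ent\ge0$, this forces $\Ent(\mu\Vert\lambda)=0$. Hence $\mu=\lambda$, that is,
\[
\sum_{(i,j)\in A}g(x_i,x_j)=\log T\quad\text{a.e. on }[0,1]^V.
\]

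\emph{Hoeffding step.} Decompose $g(x,y)=c+u(x)+v(y)+h(x,y)$, where $u,v$ have mean zero and $h$ is degenerate, meaning $\int h(x,y)\,dx=\int h(x,y)\,dy=0$. Insert this into the identity and group terms by the set of vertex coordinates on which they depend. Because $D$ is oriented, each unordered pair $\{i,j\}$ carries at most one arc, so the component indexed by that pair is exactly $h(x_i,x_j)$. The component indexed by a vertex $w$ is $d^+(w)u(x_w)+d^-(w)v(x_w)$, and the constant component is $mc$.

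The Hoeffding decomposition on $[0,1]^V$ is unique, and the right-hand side is constant. Hence:
\begin{itemize}
\item $h=0$ a.e.;
\item $mc=\log T$;
\item $d^+(w)u+d^-(w)v=0$ for every vertex $w$.
\end{itemize}
I expect the main technical care to lie here. One must justify uniqueness via orthogonality of the components in $L^2([0,1]^V)$, and check that nothing other than $h(x_i,x_j)$ lands in the pair component. Both follow by integrating out all coordinates except those of the pair or vertex in question.

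\emph{Case (i): $D$ not Eulerian.} Since $\sum_w(d^+(w)-d^-(w))=0$, there exist vertices $w,w'$ with $d^+(w)>d^-(w)$ and $d^+(w')<d^-(w')$. The two linear equations for $(u,v)$ at these vertices have determinant $d^+(w)d^-(w')-d^+(w')d^-(w)>0$. Hence $u=v=0$, so $W=e^c=T^{1/m}=p$ a.e.

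\emph{Case (ii): $D$ Eulerian.} Here every vertex gives $d(u+v)=0$ with $d\ge1$, so $v=-u$. Setting $\phi\coloneqq u$ gives $W(x,y)=p\exp(\phi(x)-\phi(y))$. The function $\phi$ is bounded because $g$ is bounded and $\phi(x)=\int g(x,y)\,dy-c$. Also $p=T^{1/m}\in(0,1]$.

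\emph{Converse.} Suppose $W(x,y)=p\exp(\phi(x)-\phi(y))$ with $0\le W\le1$ and $D$ Eulerian. In $\prod_{(i,j)\in A}W(x_i,x_j)$, each vertex $w$ contributes $e^{\phi(x_w)}$ once for each out-arc and $e^{-\phi(x_w)}$ once for each in-arc. These cancel because $d^+(w)=d^-(w)$. So the integrand is identically $p^m$, and every arc-rooted density equals $p^m$.
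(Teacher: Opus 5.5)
Your proof is correct and follows the paper's route. The entropy/Jensen equality turns the rooted equations into $\sum_{(i,j)\in A}\log W(x_i,x_j)=\log t(D,W)$ almost everywhere, and the directed Hoeffding decomposition then solves this identity, with the absence of opposite arcs keeping the pair components separate. Your variations are all valid and slightly streamline the paper's argument: you read off the components by uniqueness rather than through the variance formula, you use two vertices of opposite imbalance instead of the degree gap $\gamma_D$, and you get boundedness of $\phi$ directly from $\phi(x)=\int_0^1\log W(x,y)\,dy-\int\log W$ rather than by the paper's Fubini argument.
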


The vertex potentials in this family cancel on every Eulerian oriented graph. Following the tournament-limit terminology used by Th\"ornblad~\cite{ThornbladTournamentLimits}, a \emph{tournamenton} is a directed kernel satisfying
\[
  W(x,y)+W(y,x)=1
\]
for almost every $(x,y)$.

Quasirandom tournaments were introduced by Chung and Graham~\cite{ChungGrahamTournaments}. Buci\'c, Long, Shapira, and Sudakov~\cite{BucicEtAlLocalTournament} studied forcing by a single global tournament density, and Hancock et al.~\cite{HancockEtAlTournamentForcing} completed the resulting classification. Buci\'c et al. also study a local condition in which the correct count is required inside every vertex subset. This is different from fixing the two endpoints of an arc and requiring the resulting two-variable density to be constant. Noel, Ranganathan, and Simbaqueba~\cite{NoelRanganathanSimbaqueba} established forcing results under a regularity assumption, while Kr\'al' et al.~\cite{KralEtAlRegularTournaments} studied the Sidorenko property and forcing in regular tournaments. A more closely related result of Chatterjee and Bhattacharya~\cite{ChatterjeeBhattacharya} concerns a symmetrized two-point density of cyclic triangles; we compare the two conditions in \Cref{sec:directed}.

The tournamenton identity removes the nonconstant Eulerian family.

\begin{theorem}\label{thm:tournament-intro}
Let $T$ be a finite tournament with at least one arc, and let $W$ be a tournamenton. If $r_a^W$ is constant for every arc $a\in A(T)$, then either $t(T,W)=0$ or $W=1/2$ almost everywhere. If $T$ is \emph{arc-transitive}, meaning that its automorphism group acts transitively on $A(T)$, the same conclusion follows from the constancy of $r_a^W$ for a single arc.
\end{theorem}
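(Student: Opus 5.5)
We deduce the statement from \Cref{thm:directed-intro}, using the tournamenton identity to kill the Eulerian family. Assume $t(T,W)>0$, so that \Cref{thm:directed-intro} applies to $D=T$ with $m=|A(T)|$ and $p=t(T,W)^{1/m}$.

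\emph{Non-Eulerian case.} If $T$ is not Eulerian, part~(i) gives $W=p$ almost everywhere. The identity $W(x,y)+W(y,x)=1$ then forces $2p=1$, so $W=1/2$ almost everywhere.

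\emph{Eulerian case.} If $T$ is Eulerian, part~(ii) gives $W(x,y)=p\exp(\phi(x)-\phi(y))$ almost everywhere, with $\phi$ bounded and measurable. Write $u=\phi(x)-\phi(y)$. The tournamenton identity becomes
\[
p\bigl(e^{u}+e^{-u}\bigr)=1
\]
for almost every $(x,y)$. Hence $\cosh(\phi(x)-\phi(y))=1/(2p)$ is almost everywhere constant, so $|\phi(x)-\phi(y)|=c$ almost everywhere for some constant $c\ge0$.

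We now show $c=0$. By Fubini, for almost every $x$ we have $\phi(y)\in\{\phi(x)-c,\phi(x)+c\}$ for almost every $y$. So the pushforward of Lebesgue measure under $\phi$ is supported on at most two points $\alpha<\beta$, with $\beta-\alpha=c$ if there really are two. Take three independent uniform points $x,y,z$. Their $\phi$-values lie in a two-point set, so two of them coincide with positive probability. That pair has difference $0$, and since $|\phi(x)-\phi(y)|=c$ almost everywhere, this forces $c=0$.

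Hence $\phi$ is almost everywhere constant, $W=p$, and $2p=1$ gives $W=1/2$. This two-point exclusion is the only genuinely new step beyond \Cref{thm:directed-intro}, and it is the main obstacle. It must be done carefully with null sets, using Fubini on $[0,1]^3$ as above.

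\emph{Arc-transitive case.} Suppose $T$ is arc-transitive and $r_a^W$ is constant for one arc $a=(i,j)$. For any other arc $b$, choose an automorphism $\sigma$ of $T$ with $\sigma(a)=b$. Relabel the integration variables by $\sigma$; the product over $A(T)$ is invariant under this relabelling. This gives $r_b^W(x,y)=r_a^W(x,y)$ as functions, since $\sigma$ maps $i,j$ to the tail and head of $b$ respectively. So every arc-rooted density is the same constant, and the first part of the theorem applies.
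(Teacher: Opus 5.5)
Your proof is correct and follows the paper's route. You reduce to \Cref{thm:directed-intro} and use $W(x,y)+W(y,x)=1$ to force $W=1/2$ in the non-Eulerian case. In the Eulerian case you kill the gauge family via $|\phi(x)-\phi(y)|=c$ almost everywhere; the only cosmetic difference is how you exclude $c>0$. You note that the essential range of $\phi$ has at most two points, so independent samples coincide with positive probability, whereas the paper observes that three differences in $\{c,-c\}$ around a triangle cannot sum to zero. You also spell out the automorphism relabelling for the arc-transitive case, which the paper only indicates in the introduction.
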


The tournamenton statement is also quantitative when the kernel is bounded away from zero.  Nonregular tournaments inherit a linear estimate from the directed degree gap.  For regular tournaments, the entropy argument first places $\log W$ near the Eulerian gauge subspace; a quantitative three-point argument then forces $W$ close to $1/2$.  The constants and exponents in our stability estimates are explicit but not optimized.

We now describe the two main tools, with some detail for readers from combinatorics. Let $Z\coloneqq t(F,W)>0$. Normalizing the product of edge weights gives a probability measure on vertex assignments, with density
\[
  Z^{-1}\prod_{ij\in E(F)}W(x_i,x_j)
\]
with respect to product Lebesgue measure. This is the \emph{Gibbs measure} associated with $(F,W)$. Constant rooted densities say exactly that the pair of variables on every edge remains uniformly distributed under this new measure. Relative entropy, which measures the difference between two probability measures, then gives
\[
  m\int\log W-\log Z\ge0,
\]
while Jensen's inequality gives the reverse inequality. Equality and strict convexity imply that
\[
  \sum_{ij\in E(F)}\log W(x_i,x_j)
\]
is almost surely constant. A \emph{Hoeffding decomposition}, also called an ANOVA decomposition, writes a function on a product space as an orthogonal sum indexed by the coordinate sets on which the summands depend. In our setting, this makes it possible to compute the variance of the displayed edge-sum. The variance vanishes only for constant symmetric kernels; for directed kernels, it also vanishes on the Eulerian family $\phi(x)-\phi(y)$. The hypergraph proofs use the same decomposition on the vertex coordinates or on the local Aldous--Hoover coordinates.

In \Cref{sec:tools} we develop the entropy and variance reductions. \Cref{sec:graphon} proves the graphon results. \Cref{sec:hypergraph} treats symmetric hyperkernels and Aldous--Hoover hypergraphons. \Cref{sec:directed} treats directed kernels and tournamentons.

All graphs, oriented graphs, and hypergraphs are finite and simple.  All equalities between measurable functions are understood almost everywhere. Unless stated otherwise, all integrals are with respect to Lebesgue measure on the displayed unit cube, and all $L^p$ norms are taken over the natural domain.  Isolated vertices play no role and may be discarded.

\section{Entropy and variance reductions}\label{sec:tools}

This section develops the two facts used in every proof. The entropy argument converts constant rooted densities into a constant sum of logarithmic edge weights. The variance argument then shows that a constant sum of copies of a symmetric kernel can occur only when the kernel itself is constant.

\subsection{Gibbs marginals and entropy}

Write $\lambda$ for Lebesgue measure on $[0,1]$ and $\lambda^V$ for its product on $[0,1]^V$. All graphons are represented by measurable symmetric functions $W\colon[0,1]^2\to[0,1]$. If $F=(V,E)$ is a finite graph, $t(F,W)$ is defined by \eqref{eq:tFW-intro}. For $e=ab\in E$, the rooted density $r_e^W$ is defined by \eqref{eq:edge-rooted-intro}. It satisfies
\begin{equation}\label{eq:root-integral}
  \int_{[0,1]^2}r_e^W(x,y)\,dxdy=t(F,W).
\end{equation}
Hence, if $r_e^W$ is constant, then it is identically $t(F,W)$.

Suppose $Z\coloneqq t(F,W)>0$. The \emph{$F$-Gibbs measure in $W$} is the probability measure
\begin{equation}\label{eq:gibbs-graph}
  \frac{d\mu_{F,W}}{d\lambda^V}(\mathbf x)
  \coloneqq\frac1Z\prod_{ij\in E}W(x_i,x_j).
\end{equation}
The density of the marginal distribution of $(x_a,x_b)$ under $\mu_{F,W}$ is the \emph{normalized edge marginal}
\[
  q_e^W(x,y)\coloneqq\frac{r_e^W(x,y)}{Z}.
\]
Thus $r_e^W$ is constant if and only if $q_e^W\equiv1$.

The \emph{relative entropy} of a probability measure $\mu$ with density $\rho$ with respect to a probability measure $\nu$ is
\[
  \Ent(\mu\Vert\nu)\coloneqq\int \rho\log\rho\,d\nu\ge0.
\]
It is nonnegative, with equality if and only if $\rho=1$ almost everywhere. We use only this inequality and its equality case.

We first turn uniform edge marginals into an additive identity.

\begin{proposition}\label{prop:graph-entropy-reduction}
Let $F=(V,E)$ be a finite graph with $m=|E|\ge1$, let $W$ be a graphon, and put $Z\coloneqq t(F,W)$. Suppose $Z>0$ and $q_e^W\equiv1$ for every $e\in E$. Then $u\coloneqq\log W$ is bounded and
\begin{equation}\label{eq:entropy-reduction-sum}
  \sum_{ij\in E}u(x_i,x_j)=\log Z
\end{equation}
for almost every $\mathbf x\in[0,1]^V$.
\end{proposition}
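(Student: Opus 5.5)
The plan is to first extract a pointwise lower bound on $W$ from the rooted equations, which makes $u=\log W$ bounded. Then compute the relative entropy of the Gibbs measure $\mu_{F,W}$ with respect to $\lambda^V$ in two ways, and squeeze it to zero using Jensen's inequality. The conclusion \eqref{eq:entropy-reduction-sum} then comes from the equality case.

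\emph{Boundedness.} Since $q_e^W\equiv1$, we have $r_e^W\equiv Z$ almost everywhere, by \eqref{eq:root-integral}. In the integral \eqref{eq:edge-rooted-intro} defining $r_e^W(x,y)$ for $e=ab$, the factor $W(x,y)$ comes out of the integral. Every other factor lies in $[0,1]$, so $r_e^W(x,y)\le W(x,y)$. Hence $W\ge Z>0$ almost everywhere, and $u=\log W$ takes values in $[\log Z,0]$. Since $F$ has an edge, this bound holds on all of $[0,1]^2$ up to a null set. In particular $u$ is bounded, and all the integrals of $u$ below are finite. This is the only point where the hypothesis is used pointwise rather than through marginals, and it is what makes the next steps legitimate.

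\emph{Entropy computation.} The density of $\mu_{F,W}$ is $\rho=Z^{-1}\prod_{ij\in E}W(x_i,x_j)$, so $\log\rho=\sum_{ij\in E}u(x_i,x_j)-\log Z$ is bounded. Therefore
\[
  \Ent(\mu_{F,W}\Vert\lambda^V)=\int\rho\log\rho\,d\lambda^V=\sum_{ij\in E}\int u(x_i,x_j)\,d\mu_{F,W}-\log Z .
\]
Each summand depends only on the pair $(x_i,x_j)$. The $\mu_{F,W}$-marginal of this pair has density $q_{ij}^W\equiv1$, so each summand equals $\int_{[0,1]^2}u$. This gives
\[
  0\le \Ent(\mu_{F,W}\Vert\lambda^V)=m\int u-\log Z .
\]

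\emph{Reverse inequality and equality case.} Write $Z=\int\exp\bigl(\sum_{ij\in E}u(x_i,x_j)\bigr)\,d\lambda^V$. Jensen's inequality for the concave function $\log$ (equivalently, for $\exp$) gives
\[
  \log Z\ge\int\sum_{ij\in E}u(x_i,x_j)\,d\lambda^V=m\int u .
\]
Combined with the previous display, the relative entropy is exactly zero. By the equality case of relative entropy, $\rho=1$ $\lambda^V$-almost everywhere. That is, $\prod_{ij\in E}W(x_i,x_j)=Z$, and taking logarithms (legitimate since $W\ge Z>0$) yields \eqref{eq:entropy-reduction-sum}. Alternatively, one can invoke strict convexity in Jensen's inequality. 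I do not expect a genuine obstacle here. The only care needed is in the boundedness step, which justifies finiteness of the integrals of $u$ and the use of $\log W$, and in checking that the edge marginals under $\mu_{F,W}$ are exactly the $q_e^W$.
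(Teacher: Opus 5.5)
Your proof is correct and follows the paper's argument: the pointwise bound $W\ge Z$ from the rooted identity, the identity $\Ent(\mu_{F,W}\Vert\lambda^V)=m\int u-\log Z\ge0$ from uniform edge marginals, and Jensen's inequality in the opposite direction. The one difference is in the final step, and it is harmless. You conclude via the equality case $\Ent=0\Rightarrow\rho=1$, whereas the paper invokes strict convexity in Jensen; here the two are equivalent. Also, $r_e^W\equiv Z$ follows directly from $q_e^W=r_e^W/Z$, so citing \eqref{eq:root-integral} is unnecessary.
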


\begin{proof}
Since $q_e^W\equiv1$, we have $r_e^W\equiv Z$ for every edge $e$.  Fixing $e=ab$ and using that all other factors in the rooted density are at most $1$ gives
\[
  Z=r_e^W(x,y)\le W(x,y)
\]
for almost every $(x,y)$.  Hence $W\ge Z>0$, and $u=\log W$ is bounded.

Let $\mu\coloneqq\mu_{F,W}$ be the Gibbs measure in \eqref{eq:gibbs-graph}. Since each edge marginal is uniform,
\[
  \Ent(\mu\Vert\lambda^V)
  =\sum_{e\in E}\int_{[0,1]^2}u(x,y)q_e^W(x,y)\,dxdy-\log Z
  =m\bar u-\log Z\ge0,
\]
where $\bar u\coloneqq\int_{[0,1]^2}u$. Thus $m\bar u\ge\log Z$. On the other hand, Jensen's inequality gives
\[
  \log Z
  =\log\int\exp\left(\sum_{ij\in E}u(x_i,x_j)\right)\prod_i dx_i
  \ge m\bar u.
\]
Equality holds.  Since the exponential function is strictly convex, \eqref{eq:entropy-reduction-sum} follows.
\end{proof}

\subsection{Hoeffding decomposition and variance}

We next use the two-variable case of the standard Hoeffding, or ANOVA, decomposition~\cite{Hoeffding1948}. For a symmetric function $u\in L^2([0,1]^2)$, it gives the orthogonal decomposition
\begin{equation}\label{eq:sym-decomp}
  u(x,y)=\bar u+f(x)+f(y)+g(x,y),
\end{equation}
where $\bar u\coloneqq\int_{[0,1]^2}u$, the function $f$ has mean zero, and $g$ is symmetric and satisfies $\int_0^1g(x,y)\,dy=0$ for almost every $x$. The last condition says that $g$ has no component depending on only one coordinate. Orthogonality gives
\begin{equation}\label{eq:sym-norm}
  \norm{u-\bar u}_2^2=2\norm{f}_2^2+\norm{g}_2^2.
\end{equation}
For combinatorial purposes, this plays the same role as decomposing a function into contributions supported on different coordinate sets: the constant term uses no coordinate, the two copies of $f$ use one coordinate, and $g$ uses both coordinates.

The corresponding variance calculation is the second reduction.

\begin{proposition}\label{lem:graph-var}
Let $F=(V,E)$ have $m=|E|\ge1$.  Let $u\in L^2([0,1]^2)$ be symmetric, and let
\[
  S(\mathbf X)\coloneqq\sum_{ij\in E}u(X_i,X_j),
\]
where $(X_i)_{i\in V}$ are independent uniform random variables.  Then
\begin{equation}\label{eq:graph-var}
  \norm{u-\bar u}_2^2\le \frac1m\Var(S).
\end{equation}
In particular, if $S$ is almost surely constant, then $u$ is constant almost everywhere.
\end{proposition}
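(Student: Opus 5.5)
We would plug the Hoeffding decomposition \eqref{eq:sym-decomp} of $u$ into each summand of $S$ and compute $\Var(S)$ directly from orthogonality on $[0,1]^V$. Writing $d_i$ for the degree of vertex $i$ in $F$, we get
\[
  S(\mathbf X)-m\bar u=\sum_{i\in V}d_i f(X_i)+\sum_{ij\in E}g(X_i,X_j).
\]
The first sum collects the single-coordinate components and the second the two-coordinate components.

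The key step is the orthogonality of all these terms in $L^2(\lambda^V)$. The functions $f(X_i)$ for distinct $i$ are independent with mean zero, hence orthogonal. Each $g(X_i,X_j)$ is orthogonal to every $f(X_k)$: if $k\notin\{i,j\}$ this follows from independence and the zero means, and if $k=i$ we integrate out $X_j$ first and use $\int g(x,y)\,dy=0$. Two terms $g(X_i,X_j)$ and $g(X_k,X_l)$ for distinct edges share at most one vertex. If they share none, independence and zero mean give orthogonality. If they share exactly one vertex, say $i=k$ and $j\ne l$, we integrate out $X_j$ and again use $\int g(x,y)\,dy=0$. Edges are distinct because $F$ is simple, so no term pairs with itself twice.

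With orthogonality in hand,
\[
  \Var(S)=\Bigl(\sum_{i}d_i^2\Bigr)\norm{f}_2^2+m\norm{g}_2^2 .
\]
We then compare this with $m\norm{u-\bar u}_2^2=2m\norm{f}_2^2+m\norm{g}_2^2$ from \eqref{eq:sym-norm}. It suffices to show $\sum_i d_i^2\ge 2m$. This holds because $\sum_i d_i=2m$ and $d_i^2\ge d_i$ for nonnegative integers $d_i$. Dividing by $m$ gives \eqref{eq:graph-var}. For the final clause, if $S$ is almost surely constant then $\Var(S)=0$, so $\norm{u-\bar u}_2=0$ and $u=\bar u$ almost everywhere.

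The main obstacle is the bookkeeping in the orthogonality step. We must check that the degenerate conditions on $g$, namely symmetry and vanishing one-variable integrals, kill every cross term, including those between edges sharing a vertex, whatever orientation is used to write $g(X_i,X_j)$. Symmetry of $g$ makes the orientation irrelevant. We should also confirm that all quantities are finite, which holds because $u\in L^2$ and therefore $f,g\in L^2$.
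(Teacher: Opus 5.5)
Your proposal is correct and follows the paper's proof essentially verbatim. It uses the same Hoeffding expansion $S-m\bar u=\sum_i d_i f(X_i)+\sum_{ij\in E}g(X_i,X_j)$, the same orthogonality argument yielding $\Var(S)=\bigl(\sum_i d_i^2\bigr)\norm{f}_2^2+m\norm{g}_2^2$, and the same comparison with \eqref{eq:sym-norm} via $\sum_i d_i^2\ge\sum_i d_i=2m$.
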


\begin{proof}
Use \eqref{eq:sym-decomp}.  Then
\[
  S-m\bar u=\sum_{v\in V}d_F(v)f(X_v)+\sum_{ij\in E}g(X_i,X_j).
\]
The two sums are orthogonal.  The variables $f(X_v)$ are orthogonal for different vertices, and the variables $g(X_i,X_j)$ are orthogonal for different edges: if two edges share a vertex, degeneracy of $g$ gives zero covariance; if they are disjoint, independence gives zero covariance.  Hence
\[
  \Var(S)=\left(\sum_{v\in V}d_F(v)^2\right)\norm{f}_2^2+m\norm{g}_2^2.
\]
Since $\sum_vd_F(v)^2\ge \sum_vd_F(v)=2m$, this and \eqref{eq:sym-norm} imply \eqref{eq:graph-var}. If $S$ is constant, then \eqref{eq:graph-var} gives $\norm{u-\bar u}_2=0$.
\end{proof}

For the quantitative results, equality in Jensen's inequality must be replaced by control of its deficit.  The following elementary estimate turns the exponential deficit of a bounded centered random variable into a variance bound.

\begin{lemma}\label{lem:exp-deficit}
Let $X$ be a real random variable with $\bbE X=0$ and $|X|\le R$ almost surely.  Then
\[
  \Var(X)\le (2e^R+R^2)\log \bbE e^X.
\]
\end{lemma}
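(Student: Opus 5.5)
Since $\bbE X=0$, we have $\log\bbE e^X\ge0$. The natural route is to compare $e^x$ with $1+x$ and then pass from $\bbE e^X$ to $\log\bbE e^X$. For $|x|\le R$, Taylor's theorem with Lagrange remainder gives $e^x-1-x=\tfrac12 e^{\xi}x^2\ge \tfrac12 e^{-R}x^2$. Taking expectations and using $\bbE X=0$ yields
\[
  \bbE e^X-1\ge \tfrac12 e^{-R}\Var(X).
\]
Thus the variance is controlled by $2e^R(\bbE e^X-1)$, and it remains to replace $\bbE e^X-1$ by $\log\bbE e^X$.

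Put $M\coloneqq\bbE e^X\in[1,e^R]$ and $L\coloneqq\log M\in[0,R]$. We need $M-1$ to be dominated by $L$ up to the stated constant. The inequality $M-1=e^L-1\le Le^L\le Le^R$ gives only $\Var(X)\le 2e^{2R}L$, which is weaker than the claimed bound. So I would split $M-1=L+(e^L-1-L)$ and bound the second-order term using $\Var(X)$ itself, or rather a variance-type bound.

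For the extra term, use $e^L-1-L\le \tfrac12L^2e^L$. The target constant $2e^R+R^2$ suggests combining the two pieces so that the $e^L$ factor disappears. One option is to bound $L\le \tfrac12\bbE X^2\,e^R$, using $e^x\le 1+x+\tfrac12x^2e^R$ together with $\log M\le M-1$. An alternative that avoids juggling: since $|X|\le R$, we have $\Var(X)\le R^2$ trivially. If $L\ge1$, then $\Var(X)\le R^2\le R^2L$ and we are done. If $L<1$, use the first estimate and $e^L-1\le L\cdot\frac{e-1}{1}$, giving $\Var(X)\le 2e^R(e-1)L$. This is slightly too large, so I would refine the threshold.

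Pick the case split $L\ge \varepsilon$ versus $L<\varepsilon$. In the first case $\Var\le R^2\le R^2L/\varepsilon$. In the second case $\Var\le 2e^R(e^\varepsilon-1)/\varepsilon\cdot L$. Getting exactly $2e^R+R^2$ may require adding the two bounds rather than maximizing them. For instance, in all cases write
\[
  \Var\le 2e^R\,(M-1)=2e^R L+2e^R(e^L-1-L),
\]
and bound $2e^R(e^L-1-L)$ by $R^2L$, or else handle that term via the trivial variance bound. The main obstacle is this bookkeeping to land exactly on the constant $2e^R+R^2$. I would first try the sum form $\Var\le\min\{2e^R(e^L-1),R^2\}$ and check $\min\{2e^R(e^L-1),R^2\}\le(2e^R+R^2)L$ elementarily on $L\in[0,R]$. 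If $L\ge1$, the $R^2L$ term suffices. If $L<1$, then $e^L-1\le L+L^2\le 2L$ is not enough, so I would use $e^L-1\le L+ (e-2)L^2$ and absorb the quadratic part via $\min$ with $R^2$.
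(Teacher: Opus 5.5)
Your first step, $e^x\ge1+x+\tfrac12e^{-R}x^2$ on $[-R,R]$ giving $\bbE e^X-1\ge\tfrac12e^{-R}\Var(X)$, is exactly the paper's. Your final plan for the second step is valid, but you leave the decisive check undone.

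It does close. If $L\ge1$, then $\Var(X)\le R^2\le R^2L$. If $L<1$, then $e^L-1\le L+(e-2)L^2$, and there are two cases:
\begin{itemize}
  \item if $2e^R(e-2)L\le R^2$, then $\Var(X)\le2e^R(e^L-1)\le(2e^R+R^2)L$;
  \item otherwise $R^2<2e^R(e-2)L<2e^RL$, so $\Var(X)\le R^2\le(2e^R+R^2)L$.
\end{itemize}
The point is simply that $e-2<1$. Your threshold variant also works with $\varepsilon\coloneqq R^2/(2e^R+R^2)$, because $e^\varepsilon(1-\varepsilon)\le1$.

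The paper needs no case split. With $s\coloneqq\tfrac12e^{-R}\Var(X)\le\tfrac12e^{-R}R^2$, it writes
\[
  \log\bbE e^X\ge\log(1+s)\ge\frac{s}{1+s}\ge\frac{s}{1+\tfrac12e^{-R}R^2},
\]
and the constant appears on its own as $2e^R(1+\tfrac12e^{-R}R^2)=2e^R+R^2$. This one line is precisely a proof of your inequality $\min\{2e^R(e^L-1),R^2\}\le(2e^R+R^2)L$. Both arguments use the same two inputs, the quadratic lower bound and the trivial bound $\Var(X)\le R^2$. The paper's version is just the cleaner packaging, and it removes the bookkeeping obstacle you anticipated.

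Discard the side routes you considered. An upper bound such as $L\le\tfrac12e^R\bbE X^2$ points in the wrong direction for this lemma. The unconditional estimate $2e^R(e^L-1-L)\le R^2L$ is false: for $R=1$ and $X=\pm1$ with probability $\tfrac12$ each, the left side is about $0.594$ and the right side about $0.434$. So taking the minimum with $R^2$ is essential, not optional.
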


\begin{proof}
For $|x|\le R$,
\[
  e^x\ge 1+x+\frac{e^{-R}}2x^2.
\]
Taking expectations gives $\bbE e^X\ge1+e^{-R}\Var(X)/2$.  Since $\Var(X)\le R^2$, the inequality $\log(1+s)\ge s/(1+s)$ yields
\[
  \log \bbE e^X\ge
  \frac{e^{-R}\Var(X)/2}{1+e^{-R}R^2/2}.
\]
Rearranging proves the claim.
\end{proof}

\section{Graphon rigidity and stability}\label{sec:graphon}

We now combine the entropy and variance reductions. We first prove the exact result, then retain the two error terms to obtain stability.

\begin{proof}[Proof of Theorem~\ref{thm:graphon-exact-intro}]
Let $Z\coloneqq t(F,W)$. If $Z=0$, there is nothing to prove. Assume $Z>0$. By \eqref{eq:root-integral}, each constant rooted density is equal to $Z$. Thus $q_e^W\equiv1$ for every $e\in E(F)$.

By Proposition~\ref{prop:graph-entropy-reduction}, $u\coloneqq\log W$ satisfies
\[
  \sum_{ij\in E(F)}u(x_i,x_j)=\log Z
\]
for almost every $\mathbf x$. The final assertion of \Cref{lem:graph-var} shows that $u$ is constant. Hence $W$ is constant, say $W=p$. Finally, $Z=p^m$, and therefore $p=Z^{1/m}$.
\end{proof}

We can now answer the clique-forcing question stated in the introduction.

\begin{corollary}\label{cor:clique-forcing}
Every clique $K_k$ with $k\ge2$ is $2$-forcing. Consequently, for every $k\ge4$, the minimum $\ell$ in the question of Reiher and Schacht is $2$.
\end{corollary}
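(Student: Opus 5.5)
The plan is a compactness argument that converts the discrete $2$-forcing hypothesis into the graphon statement of \Cref{thm:graphon-exact-intro}. Fix $k\ge2$ and $p\in(0,1]$, and let $(G_n)$ satisfy the $\ell=2$ condition. We argue by contradiction. Suppose $(G_n)$ is not $p$-quasirandom. Then along a subsequence there are sets $X_n,Y_n$ with $|e_{G_n}(X_n,Y_n)-p|X_n||Y_n||\ge\varepsilon N_n^2$. By Lov\'asz--Szegedy compactness we pass to a further subsequence whose graphons $W_{G_n}$ converge in cut distance to a graphon $W$. Because the cut norm controls $e(X,Y)$ uniformly, the quasirandomness defect survives in the limit: $W\ne p$ on a set of positive measure. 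More precisely, $\norm{W-p}_\square\ge\varepsilon$.

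The next step is to transfer the rooted $K_k$ condition to $W$. Take measurable $A,B\subseteq[0,1]$. We want
\[
\int_{A\times B} r^W_{e}(x,y)\,dx\,dy = p^{\binom k2}\lambda(A)\lambda(B)
\]
for the edge $e=12$ of $K_k$. For the step-graphons $W_{G_n}$, choose sets $A,B$ that are unions of the vertex intervals, up to rounding error $o(1)$. For these sets the left-hand integral equals $K_k(X,Y;G_n)/N_n^k$ up to $O(1/N_n)$, where the error accounts for non-injective tuples. The hypothesis then gives the identity up to $o(1)$. To pass to the limit we use the standard counting lemma with two rooted vertices restricted to $A\times B$. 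The functional $W\mapsto\int \mathbf 1_A(x_1)\mathbf 1_B(x_2)\prod_{ij}W(x_i,x_j)$ is Lipschitz in cut distance, by telescoping one edge at a time: each term is a cut-norm pairing with bounded functions of $x_i$ and $x_j$.

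One technical point needs care. Cut-distance convergence involves measure-preserving relabelings, so the sets $A,B$ must be transported with the relabeling. The cleanest fix is to use the equivalent formulation that the convergence is in cut norm after relabeling, $\norm{W_{G_n}^{\sigma_n}-W}_\square\to0$. Given $A,B$, we pull them back by $\sigma_n$ and approximate the pulled-back sets by unions of vertex intervals. This approximation is possible because the relabeled step function is still a step function on a rearranged partition. Once the displayed identity holds for all $A,B$, the Lebesgue differentiation theorem gives $r^W_{12}\equiv p^{\binom k2}$ almost everywhere. By symmetry of $K_k$, this propagates to all edges. Indeed, edge-transitivity gives $r^W_e(x,y)=r^W_{12}(x,y)$ or $r^W_{12}(y,x)$, and $r^W_{12}$ is symmetric anyway.

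Now apply \Cref{thm:graphon-exact-intro}. By \eqref{eq:root-integral} we have $t(K_k,W)=p^{\binom k2}>0$, so $W=p$ almost everywhere. This contradicts $\norm{W-p}_\square\ge\varepsilon$, so $(G_n)$ is $p$-quasirandom and $K_k$ is $2$-forcing. For the minimality statement, Reiher and Schacht showed that no clique is $1$-forcing, so the minimum $\ell$ is exactly $2$ for every $k\ge4$. The main obstacle is the limit transfer in the second step, namely the rooted counting lemma uniform over $A,B$ together with the relabeling issue. I would handle it with the cut-norm telescoping bound
\[
\Bigl|\int \mathbf 1_A\mathbf 1_B\prod W_1-\int\mathbf 1_A\mathbf 1_B\prod W_2\Bigr|\le\binom k2\norm{W_1-W_2}_\square,
\]
which is uniform in $A$ and $B$.
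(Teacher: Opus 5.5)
Your architecture is the paper's: cut-distance compactness, a two-root counting lemma that is uniform in the root sets, edge-transitivity, and Theorem~\ref{thm:graphon-exact-intro}. Arguing by contradiction through $\norm{W-p}_\square\ge\varepsilon$, instead of quoting Lov\'asz's equivalence between cut convergence to the constant graphon $p$ and $p$-quasirandomness, is a legitimate variant.

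\textbf{The gap.} The step that fails as written is the passage from arbitrary measurable sets to unions of vertex intervals. You propose to approximate the (transported) sets $A,B$ by unions of vertex intervals up to $o(1)$. This is false in general: if $A$ is the left half of every $I_v$, then the symmetric difference of $A$ with every union of vertex intervals has measure exactly $1/2$. After relabelling you are in precisely this situation, because the transported set ($\sigma_n(A)$ or $\sigma_n^{-1}(A)$, depending on convention) is an arbitrary measurable set. Your remark that $W_{G_n}^{\sigma_n}$ is a step function on the rearranged partition $\{\sigma_n^{-1}(I_v)\}$ does not help, since $A$ need not be a union of those cells either.

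\textbf{The missing idea.} The paper replaces approximation by a bilinearity argument. The function $r_e^{W_{G_n}}-p^m$ is constant on each product $I_u\times I_v$. Hence $\int_{A\times B}(r_e^{W_{G_n}}-p^m)$ depends only on the proportion vectors $\bigl(N_n\lambda(A\cap I_u)\bigr)_u$ and $\bigl(N_n\lambda(B\cap I_v)\bigr)_v$ in $[0,1]^{N_n}$, and it is bilinear in them. A bilinear form, or its negative, attains its maximum over a product of cubes at a vertex, that is, at unions of vertex intervals. Composing with a measure-preserving $\sigma_n$ only replaces $\mathbf 1_A,\mathbf 1_B$ by $[0,1]$-valued weights, and the same extremal argument handles these. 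This gives $\sup_{A,B}\bigl|\int_{A\times B}(r_e^{W_{G_n}^{\sigma_n}}-p^m)\bigr|=o(1)$ over all measurable $A,B$ with no approximation. Your uniform telescoping bound then completes the transfer to $W$ exactly as you describe.
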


\begin{proof}
Fix $p\in(0,1]$, put $m\coloneqq\binom{k}{2}$, and let $(G_n)$ satisfy the two-set condition in the definition of $2$-forcing. Let $W_n$ be the empirical graphon of $G_n$, with vertex intervals $(I_v)_{v\in V(G_n)}$, and root $K_k$ at its first edge $e$. For $X,Y\subseteq V(G_n)$, put
\[
  A_X\coloneqq\bigcup_{v\in X}I_v,
  \qquad\text{and}\qquad
  A_Y\coloneqq\bigcup_{v\in Y}I_v.
\]
Since $G_n$ is loopless and $K_k$ is complete, every nonzero homomorphism from $K_k$ to $G_n$ is injective. Consequently,
\[
  \int_{A_X\times A_Y}r_e^{W_n}(x,y)\,dxdy
  =\frac{K_k(X,Y;G_n)}{N_n^k}.
\]
The two-set hypothesis therefore gives
\[
  \sup_{A,B\subseteq[0,1]}
  \left|
    \int_{A\times B}\bigl(r_e^{W_n}(x,y)-p^m\bigr)\,dxdy
  \right|
  =o(1).
\]
Indeed, the hypothesis first gives this estimate when $A$ and $B$ are unions of vertex intervals. The function $r_e^{W_n}-p^m$ is constant on each product $I_u\times I_v$. After recording the proportions of $A$ and $B$ in these intervals, the integral is a bilinear function on two finite-dimensional cubes. The maximum of this function or its negative is attained at a vertex of those cubes, which corresponds to taking $A$ and $B$ to be unions of vertex intervals.

The displayed supremum is unchanged by a measure-preserving relabeling. By compactness of the graphon space, every subsequence has a further subsequence that, after such relabelings, converges in cut norm to a graphon $W$; we continue to denote the relabeled graphons by $W_n$. For fixed measurable sets $A,B\subseteq[0,1]$, the integral
\[
  \int_{A\times B}r_e^U(x,y)\,dxdy
\]
is the homomorphism density of $K_k$ in $U$ with vertex weights $\mathbf 1_A$ and $\mathbf 1_B$ on the two roots. The weighted counting lemma~\cite[Theorem~10.23]{LovaszBook}, or its usual telescoping proof with these two vertex weights, therefore gives
\[
  \int_{A\times B}r_e^{W_n}(x,y)\,dxdy
  \longrightarrow
  \int_{A\times B}r_e^W(x,y)\,dxdy.
\]
Combining this convergence with the preceding uniform estimate yields
\[
  \int_{A\times B}\bigl(r_e^W(x,y)-p^m\bigr)\,dxdy=0
\]
for all measurable $A,B\subseteq[0,1]$. Hence $r_e^W=p^m$ almost everywhere. Since $K_k$ is edge-transitive, the same identity holds for every edge, so \Cref{thm:graphon-exact-intro} gives $W=q$ almost everywhere for some $q\in[0,1]$. The rooted identity now gives $q^m=p^m$, and hence $q=p$.

Every graphon limit of $(G_n)$ is therefore the constant graphon $p$. The equivalence between convergence to this graphon and $p$-quasirandomness, as recorded by Lov\'asz~\cite[Section~16.7.1]{LovaszBook}, now proves that $(G_n)$ is $p$-quasirandom. Thus $K_k$ is $2$-forcing. Reiher and Schacht~\cite[Section~5]{ReiherSchacht} showed that no clique is $1$-forcing, so the minimum is $2$.
\end{proof}

The exact proof used equality in both the entropy inequality and Jensen's inequality. We now keep track of the two deficits. The constants below are explicit but not optimized.

For a graph $F$ and a graphon $W$ with $t(F,W)>0$, define the \emph{total marginal error} by
\[
  \Delta_F(W)\coloneqq\sum_{e\in E(F)}\norm{q_e^W-1}_{L^1([0,1]^2)}.
\]

\begin{theorem}\label{thm:graph-stability}
Let $F$ be a finite graph with $m=|E(F)|\ge1$. Suppose $a\le W\le1$ almost everywhere, where $a\in(0,1]$, and put $M\coloneqq\log(1/a)$. Then, with $\bar u\coloneqq\int\log W$,
\begin{equation}\label{eq:graph-stability}
  \norm{\log W-\bar u}_2^2
  \le \frac1m(2e^{mM}+m^2M^2)M\Delta_F(W).
\end{equation}
Consequently, if $p\coloneqq e^{\bar u}$, then
\[
  \max\left\{\norm{W-p}_1^2,~\norm{W-p}_2^2 \right\} \le \frac1m(2e^{mM}+m^2M^2)M\Delta_F(W). 
\]
\end{theorem}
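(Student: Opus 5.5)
The plan is to quantify the two equalities from the exact proof. Write $u\coloneqq\log W$, so $-M\le u\le0$. Let $Z\coloneqq t(F,W)$; since $W\ge a$, we have $Z\ge a^m>0$, so the Gibbs measure $\mu\coloneqq\mu_{F,W}$ and the marginals $q_e^W$ are defined. Put $S(\mathbf x)\coloneqq\sum_{ij\in E}u(x_i,x_j)$ and $X\coloneqq S-m\bar u$, so that $\bbE X=0$ under $\lambda^V$. The target is
\[
  \log\bbE e^X=\log Z-m\bar u\le M\Delta_F(W).
\]
Given this, \Cref{lem:exp-deficit} and \Cref{lem:graph-var} finish the argument.

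\emph{Upper bound on the Jensen deficit.} As in \Cref{prop:graph-entropy-reduction}, compute the entropy exactly rather than using uniform marginals:
\[
  0\le\Ent(\mu\Vert\lambda^V)=\int(S-\log Z)\,d\mu=\sum_{e\in E}\int u\,q_e^W-\log Z.
\]
Write $\int u\,q_e^W=\bar u+\int u\,(q_e^W-1)$. Since $|u|\le M$, each correction term is at most $M\norm{q_e^W-1}_1$ in absolute value. Therefore $\log Z-m\bar u\le M\Delta_F(W)$, which is exactly $\log\bbE e^X\le M\Delta_F(W)$.

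\emph{Variance bound.} Since $S$ takes values in $[-mM,0]$ and $m\bar u\in[-mM,0]$, we get $|X|\le mM$. \Cref{lem:exp-deficit} with $R=mM$ gives
\[
  \Var(S)=\Var(X)\le(2e^{mM}+m^2M^2)\,M\Delta_F(W).
\]
\Cref{lem:graph-var} then gives $\norm{u-\bar u}_2^2\le\frac1m\Var(S)$, which is \eqref{eq:graph-stability}. I do not expect a real obstacle here. The only point needing care is the range of $X$, since a bound of $2mM$ would spoil the constant. The bound $mM$ holds because $S$ and its mean both lie in the same interval $[-mM,0]$.

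\emph{Passing from $\log W$ to $W$.} Both $W$ and $p=e^{\bar u}$ lie in $[a,1]$, and $\exp$ is $1$-Lipschitz on $(-\infty,0]$. Hence $|W-p|\le|u-\bar u|$ pointwise, so $\norm{W-p}_2\le\norm{u-\bar u}_2$. Cauchy--Schwarz on the probability space gives $\norm{W-p}_1\le\norm{W-p}_2$. Both squared norms are therefore bounded by the right-hand side of \eqref{eq:graph-stability}.
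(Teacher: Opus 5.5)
Your proposal is correct and follows essentially the same route as the paper. You bound the Jensen deficit $\log Z-m\bar u$ by $M\Delta_F(W)$ via the exact Gibbs entropy identity, apply \Cref{lem:exp-deficit} with $R=mM$ and then \Cref{lem:graph-var}, and finish with the $1$-Lipschitz property of $\exp$ on $[-M,0]$ and Cauchy--Schwarz; your explicit check that $Z\ge a^m>0$, so that $\mu_{F,W}$ and $\Delta_F(W)$ are defined, is a detail the paper leaves implicit.
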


\begin{proof}
Let $u\coloneqq\log W$, $Z\coloneqq t(F,W)$, and $S\coloneqq\sum_{ij\in E(F)}u(X_i,X_j)$ for independent uniform $X_i$. Let
\[
  J\coloneqq\log Z-m\bar u\ge0
\]
be the \emph{Jensen deficit}. The Gibbs entropy identity gives
\begin{align*}
  \Ent(\mu_{F,W}\Vert\lambda^V)
  &=-J+\sum_{e\in E(F)}\int u(x,y)(q_e^W(x,y)-1)\,dxdy.
\end{align*}
Since entropy is nonnegative and $|u|\le M$,
\[
  J\le M\Delta_F(W).
\]
Now $X\coloneqq S-\bbE S$ satisfies $|X|\le mM$ and
\[
  \log\bbE e^X=J.
\]
By \Cref{lem:exp-deficit},
\[
  \Var(S)\le (2e^{mM}+m^2M^2)J.
\]
Combine this bound with \Cref{lem:graph-var} to obtain \eqref{eq:graph-stability}. Finally, $x\mapsto e^x$ is $1$-Lipschitz on $[-M,0]$, so $\norm{W-p}_2\le\norm{u-\bar u}_2$ and $\norm{W-p}_1\le\norm{W-p}_2$.
\end{proof}

\section{Hyperkernels and full hypergraphons}\label{sec:hypergraph}

We now apply the same argument to uniform hypergraphs. The entropy step is unchanged; only the variance calculation depends on the coordinate model. For this step we need a finite-product version of the Hoeffding decomposition used in \Cref{sec:tools}, with components indexed by arbitrary subsets of the coordinate set~\cite{Hoeffding1948}.

For a nonempty set $R$ and $i\in R$, a function $h\in L^2([0,1]^R)$ is \emph{degenerate in coordinate $i$} if $\int_0^1h(x_R)\,dx_i=0$ for almost every choice of the other variables.

\begin{proposition}\label{prop:finite-hoeffding}
Let $I$ be a finite set and let $u\in L^2([0,1]^I)$.  There are unique functions $u_R\in L^2([0,1]^R)$, $R\subseteq I$, such that
\[
  u(x_I)=\sum_{R\subseteq I}u_R(x_R),
\]
where $u_\emptyset\coloneqq\int u$ and every component $u_R$ with $R\ne\emptyset$ is degenerate in each coordinate in $R$. After being viewed as functions on $[0,1]^I$, the components are mutually orthogonal. In particular,
\[
  \norm{u-\int u}_2^2=\sum_{\emptyset\ne R\subseteq I}\norm{u_R}_2^2.
\]
\end{proposition}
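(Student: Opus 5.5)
We construct the components explicitly through conditional expectations and then use inclusion--exclusion. For $S\subseteq I$, let $E_S$ be the operator on $L^2([0,1]^I)$ that integrates out the coordinates in $I\setminus S$:
\[
  (E_Su)(x_S)\coloneqq\int_{[0,1]^{I\setminus S}}u(x_I)\,dx_{I\setminus S}.
\]
By Fubini, $E_S$ is the orthogonal projection onto the closed subspace of functions depending only on $x_S$. Because Lebesgue measure on the cube is a product measure, these projections commute and satisfy $E_SE_T=E_{S\cap T}$. Define
\[
  u_R\coloneqq\sum_{S\subseteq R}(-1)^{|R\setminus S|}E_Su .
\]
Then $u_\emptyset=E_\emptyset u=\int u$. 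Möbius inversion on the Boolean lattice gives $\sum_{R\subseteq I}u_R=E_Iu=u$.

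For degeneracy, fix $i\in R$ and compute $E_{R\setminus\{i\}}u_R$. Pair each $S\subseteq R\setminus\{i\}$ with $S\cup\{i\}$. Since
\[
  E_{R\setminus\{i\}}E_{S\cup\{i\}}=E_S=E_{R\setminus\{i\}}E_S,
\]
the two terms carry opposite signs and cancel. Hence $\int u_R\,dx_i=0$ almost everywhere.

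Orthogonality follows next. Take $R\ne R'$, say $i\in R\setminus R'$. Then $u_{R'}$ does not depend on $x_i$. By Fubini,
\[
  \langle u_R,u_{R'}\rangle=\int u_{R'}\left(\int u_R\,dx_i\right)dx_{I\setminus\{i\}}=0 .
\]
The Pythagorean identity then gives the norm formula, since $u-\int u=\sum_{R\ne\emptyset}u_R$. All components lie in $L^2$ because each $E_S$ is a contraction.

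For uniqueness, suppose $\sum_R v_R=0$ with every $v_R$ degenerate in each of its coordinates and $v_\emptyset$ constant. The argument above is stated for nonempty $R$ but applies equally to constants versus degenerate components, so the $v_R$ are mutually orthogonal. Taking the inner product of the sum with $v_R$ gives $\norm{v_R}_2^2=0$, so every $v_R$ vanishes. Applying this to the difference of two decompositions shows that the decomposition is unique.

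The main point requiring care is the identity $E_SE_T=E_{S\cap T}$ together with the measure-theoretic interpretation of "depends only on $x_S$". Both follow from Fubini--Tonelli for $L^1$ functions on a product of probability spaces, and $L^2\subseteq L^1$ on the cube, so the remaining steps are bookkeeping.
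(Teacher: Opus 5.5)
Your proof is correct and takes essentially the paper's approach. The paper cites Hoeffding and, in the remark after the proposition, gives the same inclusion--exclusion formula $u_R=\sum_{S\subseteq R}(-1)^{|R|-|S|}\bbE[u\mid x_S]$ as the source of degeneracy and orthogonality; your pairing cancellation, Fubini orthogonality argument, and orthogonality-based uniqueness supply the details the paper leaves implicit.
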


\noindent\emph{Remark.}
For $S\subseteq I$, let $\bbE[u\mid x_S]$ denote the function obtained by averaging $u$ over the coordinates outside $S$. The components are given explicitly by
\[
  u_R\coloneqq\sum_{S\subseteq R}(-1)^{|R|-|S|}\bbE[u\mid x_S].
\]
This formula gives the degeneracy and orthogonality in the proposition. It also shows that the decomposition respects symmetries. More precisely, if a finite group $\Gamma$ acts on $I$ and $u$ is invariant under this action, then, for every $\gamma\in\Gamma$, the component indexed by $\gamma R$ is obtained from $u_R$ by relabelling coordinates. We use this equivariance below to identify components related by coordinate permutations.

\subsection{Symmetric uniform hyperkernels}

We first consider the direct $k$-variable analogue of a graphon. Symmetry identifies the Hoeffding components with the same number of coordinates, which makes the variance calculation simple.

Recall that a symmetric $k$-uniform hyperkernel is a measurable function
\[
  K\colon[0,1]^k\to[0,1]
\]
that is invariant under permutations of the coordinates.  If $H=(V,E)$ is a finite $k$-uniform hypergraph, define
\[
  t(H,K)\coloneqq\int_{[0,1]^V}\prod_{e\in E}K(x_e)\prod_{v\in V}dx_v,
\]
where $x_e$ denotes the $k$-tuple $(x_v)_{v\in e}$ in any order.  For $e\in E$, the edge-rooted density is
\begin{equation}\label{eq:hyperkernel-root}
  r_e^K(\mathbf x_e)\coloneqq\int_{[0,1]^{V\setminus e}} \prod_{f\in E}K(\mathbf z_f)\prod_{v\notin e}dz_v,
\end{equation}
where the variables on $e$ are fixed to be $\mathbf x_e$.

Applying \Cref{prop:finite-hoeffding} to a symmetric $u\in L^2([0,1]^k)$ gives unique symmetric functions $u_s\in L^2([0,1]^s)$, $1\le s\le k$, each degenerate in every coordinate, such that
\[
  u(x_1,\ldots,x_k)=\bar u+
  \sum_{s=1}^k \sum_{A\in \binom{[k]}{s}} u_s(x_A).
\]
The summands are orthogonal, so
\[
  \norm{u-\bar u}_2^2
  =\sum_{s=1}^k \binom{k}{s}\norm{u_s}_2^2.
\]
Here symmetry and equivariance identify all components indexed by coordinate sets of the same size.

The following estimate is the hyperkernel analogue of \Cref{lem:graph-var}.

\begin{proposition}\label{lem:hyperkernel-var}
Let $H=(V,E)$ be a finite $k$-uniform hypergraph with $m=|E|\ge1$.  If $u\in L^2([0,1]^k)$ is symmetric and $S_H(u)\coloneqq\sum_{e\in E}u(X_e)$,  where $(X_v)_{v\in V}$ are independent uniform variables, then
\[
  m\norm{u-\bar u}_2^2 \le \Var(S_H(u)).
\]
Consequently, if $S_H(u)$ is almost surely constant, then $u$ is constant almost everywhere.
\end{proposition}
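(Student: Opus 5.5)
We mirror the proof of \Cref{lem:graph-var}, using the symmetric Hoeffding decomposition displayed just before the statement. Write $u=\bar u+\sum_{s=1}^k\sum_{A\in\binom{[k]}{s}}u_s(x_A)$, with each $u_s$ symmetric and degenerate in every coordinate. Substituting into $S_H(u)$ and grouping terms by the vertex set they depend on gives
\[
  S_H(u)-m\bar u=\sum_{s=1}^k\sum_{B\in\binom{V}{s}} d_H(B)\,u_s(X_B),
\]
where $d_H(B)$ is the number of edges of $H$ containing $B$. By symmetry of $u_s$, the order of the coordinates in $X_B$ is irrelevant.

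The key step is orthogonality of the family $\{u_s(X_B)\}$ over all pairs $(s,B)$. Take two distinct sets $B\ne B'$. Without loss of generality some $v\in B\setminus B'$. Condition on all variables except $X_v$; the term $u_{|B'|}(X_{B'})$ is then fixed. Degeneracy of $u_{|B|}$ in the coordinate $X_v$ makes the conditional expectation of the product vanish. This handles both sets of equal size and sets of different sizes. It also shows that every nonconstant term has mean zero, so the constant term $m\bar u$ is exactly $\bbE S_H(u)$.

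Since each $X_B$ is a vector of independent uniform variables, $\Var(u_s(X_B))=\norm{u_s}_2^2$, and therefore
\[
  \Var(S_H(u))=\sum_{s=1}^k\Bigl(\sum_{B\in\binom Vs}d_H(B)^2\Bigr)\norm{u_s}_2^2.
\]
The degrees $d_H(B)$ are nonnegative integers, so $d_H(B)^2\ge d_H(B)$. Double counting gives $\sum_{|B|=s}d_H(B)=m\binom ks$. Hence
\[
  \Var(S_H(u))\ge m\sum_{s=1}^k\binom ks\norm{u_s}_2^2=m\norm{u-\bar u}_2^2,
\]
using the norm identity recorded before the statement. This is the claimed inequality.

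The consequence is immediate. If $S_H(u)$ is almost surely constant, the variance is zero, so $\norm{u-\bar u}_2=0$ and $u$ is constant almost everywhere.

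The only step needing care is the orthogonality of terms whose index sets overlap without being equal; the single-coordinate conditioning argument above handles it. Simplicity of $H$ is used only to ensure that each edge contributes one term for each of its $s$-subsets, which is what makes the double count equal $m\binom ks$.
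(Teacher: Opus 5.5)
Your proof is correct and follows the paper's argument essentially step for step. Both expand $S_H(u)-m\bar u$ in the symmetric Hoeffding components with multiplicity $c_B$ (your $d_H(B)$), use degeneracy for orthogonality across distinct vertex sets, and combine $\sum_B c_B^2\ge\sum_B c_B=m\binom{k}{s}$ with the norm identity $\norm{u-\bar u}_2^2=\sum_s\binom{k}{s}\norm{u_s}_2^2$.
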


\begin{proof}
Use the symmetric Hoeffding decomposition above. For $B\subseteq V$ with $|B|=s$, let
\[
  c_B\coloneqq\#\{e\in E\colon B\subseteq e\}.
\]
Then
\[
  S_H(u)-m\bar u
  =\sum_{s=1}^k\sum_{B\in\binom{V}{s}}c_B u_s(X_B).
\]
The terms indexed by distinct vertex sets $B$ are orthogonal by degeneracy, exactly as in \Cref{prop:finite-hoeffding}.  Therefore
\[
  \Var(S_H(u))=
  \sum_{s=1}^k\sum_{B\in\binom{V}{s}}c_B^2\norm{u_s}_2^2.
\]
For each fixed $s$, the integers $c_B$ satisfy
\[
  \sum_{B\in\binom{V}{s}}c_B=m\binom{k}{s},
  \qquad\text{and}\qquad
  \sum_{B\in\binom{V}{s}}c_B^2\ge \sum_{B\in\binom{V}{s}}c_B.
\]
Combining this with the norm identity above gives the stated variance bound. If $S_H(u)$ is constant, then the variance is zero and hence $u=\bar u$ in $L^2$.
\end{proof}

We can now combine the entropy reduction with \Cref{lem:hyperkernel-var}.

\begin{proof}[Proof of Theorem~\ref{thm:hypergraph-intro}\textup{(i)}]
Let $Z\coloneqq t(H,K)$. If $Z=0$, there is nothing to prove, so assume $Z>0$. Integrating \eqref{eq:hyperkernel-root} over the rooted variables gives $t(H,K)$, so the constant value of each $r_e^K$ is $Z$. Thus $r_e^K\equiv Z$ for every $e\in E(H)$. Since all factors in \eqref{eq:hyperkernel-root} except the rooted factor are at most $1$, we have $K\ge Z$ almost everywhere. Thus $u\coloneqq\log K$ is bounded.

Let $\mu$ be the $H$-Gibbs measure with density $Z^{-1}\prod_{e\in E(H)}K(x_e)$.  The rooted hypotheses say that every $k$-dimensional edge marginal of $\mu$ is Lebesgue measure.  Therefore
\[
  \Ent(\mu\Vert\lambda^V)=m\int_{[0,1]^k}u-\log Z\ge0.
\]
Jensen's inequality gives
\[
  \log Z
  =\log\bbE\exp\left(\sum_{e\in E(H)}u(X_e)\right)
  \ge m\int_{[0,1]^k}u.
\]
Equality holds, and the strict convexity of the exponential implies
\[
  \sum_{e\in E(H)}u(X_e)=\log Z
\]
almost surely.  By \Cref{lem:hyperkernel-var}, $u$ is constant.  Hence $K=p$ almost everywhere and $p=Z^{1/m}$.
\end{proof}

Keeping the entropy and Jensen deficits quantitative gives the corresponding stability theorem.

For a $k$-uniform hypergraph $H$ and a hyperkernel $K$ with $t(H,K)>0$, put $q_e^K\coloneqq r_e^K/t(H,K)$ and define the \emph{total marginal error} by
\[
  \Delta_H(K)\coloneqq\sum_{e\in E(H)}\norm{q_e^K-1}_{L^1([0,1]^k)}.
\]

\begin{theorem}
Let $H$ be a finite $k$-uniform hypergraph with $k\ge2$ and $m\ge1$ edges. Suppose $a\le K\le1$ almost everywhere for some $a\in(0,1]$, and put $M\coloneqq\log(1/a)$. Then
\begin{equation}\label{eq:hyperkernel-stability}
  \norm{\log K-\int\log K}_2^2
  \le \frac1m(2e^{mM}+m^2M^2)M\Delta_H(K).
\end{equation}
Consequently, if $p\coloneqq\exp(\int\log K)$, then
\[
  \max\left\{\norm{K-p}_2^2,~\norm{K-p}_1^2\right\}\le \frac1m(2e^{mM}+m^2M^2)M\Delta_H(K).
\]
\end{theorem}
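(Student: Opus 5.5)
The argument repeats the proof of \Cref{thm:graph-stability} almost verbatim. The only change is that \Cref{lem:hyperkernel-var} replaces \Cref{lem:graph-var}.

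Put $u\coloneqq\log K$, $Z\coloneqq t(H,K)$ and $\bar u\coloneqq\int_{[0,1]^k}u$. Note that $Z\ge a^m>0$, so $q_e^K$ and $\Delta_H(K)$ are defined, and $|u|\le M$. Let $S\coloneqq\sum_{e\in E(H)}u(X_e)$ for independent uniform $(X_v)_{v\in V}$, and let $J\coloneqq\log Z-m\bar u$ be the Jensen deficit. Jensen's inequality gives $J\ge0$.

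\textbf{Step 1: entropy identity.} Let $\mu$ be the $H$-Gibbs measure with density $\rho=Z^{-1}\prod_{e}K(x_e)$. Then $\log\rho=\sum_e u(x_e)-\log Z$. Each term $u(x_e)$ depends only on the coordinates of $e$, so integrating it against $\mu$ uses only the $k$-dimensional marginal of $\mu$ on $e$, whose density is $q_e^K$. Hence
\[
  0\le\Ent(\mu\Vert\lambda^V)=\sum_{e}\int_{[0,1]^k}u\,q_e^K-\log Z=-J+\sum_e\int_{[0,1]^k}u\,(q_e^K-1).
\]
Since $|u|\le M$, this gives $J\le M\Delta_H(K)$.

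\textbf{Step 2: exponential deficit.} Let $X\coloneqq S-\bbE S=S-m\bar u$. Then $|X|\le mM$, because each of the $m$ terms $u(X_e)-\bar u$ lies in $[-M,M]$: both $u$ and $\bar u$ lie in $[-M,0]$. Moreover,
\[
  \log\bbE e^X=\log Z-m\bar u=J.
\]
\Cref{lem:exp-deficit} with $R=mM$ then gives $\Var(S)\le(2e^{mM}+m^2M^2)J$.

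\textbf{Step 3: variance reduction.} \Cref{lem:hyperkernel-var} gives $m\norm{u-\bar u}_2^2\le\Var(S)$. Chaining this with Steps 1 and 2 yields \eqref{eq:hyperkernel-stability}.

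For the consequence, set $p=e^{\bar u}$. The map $x\mapsto e^x$ is $1$-Lipschitz on $[-M,0]$, which contains both $u$ and $\bar u$. Hence $|K-p|\le|u-\bar u|$ pointwise, so $\norm{K-p}_2\le\norm{u-\bar u}_2$. Cauchy--Schwarz on the probability space gives $\norm{K-p}_1\le\norm{K-p}_2$.

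I expect no genuine obstacle. The one point that needs care is that the entropy identity uses the full $k$-dimensional edge marginals, so the rooted density must be normalized by $Z$. That normalization is exactly how $q_e^K$ is defined.
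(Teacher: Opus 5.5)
Your proposal is correct and is essentially the paper's own proof. Both use the same chain: the Gibbs entropy identity bounds the Jensen deficit $J=\log Z-m\bar u$ by $M\Delta_H(K)$; the exponential-deficit lemma with $R=mM$ turns this into $\Var(S)\le(2e^{mM}+m^2M^2)J$; the hyperkernel variance lemma and the $1$-Lipschitz property of $\exp$ on $[-M,0]$ then finish the argument. You also make explicit two small points the paper leaves implicit: $Z\ge a^m>0$, and $|u(X_e)-\bar u|\le M$ because both values lie in $[-M,0]$.
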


\begin{proof}
Since $a>0$, put $Z\coloneqq t(H,K)>0$, $u\coloneqq\log K$, and $\bar u\coloneqq\int u$, and let
\[
  S\coloneqq\sum_{e\in E(H)}u(X_e).
\]
The entropy calculation used in \Cref{thm:graph-stability} gives the Jensen deficit bound
\[
  J\coloneqq\log Z-m\bar u\le M\Delta_H(K).
\]
For $X\coloneqq S-\bbE S$, we have $|X|\le mM$ and $\log\bbE e^X=J$.  Hence \Cref{lem:exp-deficit} gives
\[
  \Var(S)\le (2e^{mM}+m^2M^2)M\Delta_H(K).
\]
The lower bound \Cref{lem:hyperkernel-var} gives
\[
  m\norm{u-\bar u}_2^2\le \Var(S),
\]
which proves \eqref{eq:hyperkernel-stability}.  The final $L^2$ and $L^1$ conclusions follow from the same Lipschitz argument as in \Cref{thm:graph-stability}.
\end{proof}

\subsection{Full hypergraphons}

The symmetric kernel model uses only vertex coordinates. Dense hypergraph limits require additional coordinates for proper subsets of an edge. We use the dissociated Aldous--Hoover convention described in the introduction: there is no global mixing coordinate, and the independent Bernoulli coordinate indexed by the full edge has been integrated out. Thus $W$ is the conditional edge probability given all nonempty proper-subset coordinates, and it is represented by an $S_k$-invariant measurable function of the form
\[
  W\colon[0,1]^{\calA_k}\to[0,1],
  \qquad\text{where}\qquad
  \calA_k\coloneqq\{A\colon\emptyset\ne A\subsetneq[k]\}.
\]
Here $S_k$ permutes the ground set $[k]$ and hence the coordinates in $\calA_k$. More precisely, for $\pi\in S_k$ and a coordinate tuple $\mathbf x=(x_A)_{A\in\calA_k}$, put $(\pi\cdot\mathbf x)_A\coloneqq x_{\pi^{-1}A}$ for $A\in\calA_k$. Thus the $S_k$-invariance of $W$ means that $W(\pi\cdot\mathbf x)=W(\mathbf x)$ for every $\pi\in S_k$ and almost every $\mathbf x$. Also, $|\calA_k|=2^k-2$. For a finite $k$-uniform hypergraph $H=(V,E)$, choose independent uniform variables $X_B$, one for each nonempty $B\subseteq V$ with $|B|<k$. For every edge $e\in E$, choose an ordering $\eta_e\colon[k]\to e$ and write
\[
  X_e^\circ\coloneqq\bigl(X_{\eta_e(A)}\bigr)_{A\in\calA_k}.
\]
The $S_k$-invariance of $W$ makes the following definition independent of the chosen orderings:
\[
  t(H,W)\coloneqq\bbE\prod_{e\in E(H)}W(X_e^\circ).
\]
For $e\in E(H)$, the full-edge-rooted density is the function on $[0,1]^{\calA_k}$ defined by
\[
  r_e^W(\mathbf x)\coloneqq
  \bbE\left[
    \prod_{f\in E(H)}W(X_f^\circ)
    \;\middle|\;
    X_{\eta_e(A)}=x_A\ \text{for every }A\in\calA_k
  \right].
\]
Here and below, we use the canonical version obtained by fixing the listed coordinates and integrating all remaining independent coordinates. Thus all coordinates $X_B$ with $\emptyset\ne B\subsetneq e$ are fixed, not only the vertex variables $X_{\{v\}}$. We have $\int r_e^W=t(H,W)$, and, when $t(H,W)>0$, the \emph{normalized full-edge marginal} is
\[
  q_e^W\coloneqq\frac{r_e^W}{t(H,W)}.
\]

For $k=3$, for example, the local variables are $x_1,x_2,x_3,x_{12},x_{13},x_{23}$. There is no coordinate indexed by the full edge: once the variables indexed by its nonempty proper subsets have been fixed, $W$ itself gives the conditional edge probability. This is why full-edge rooting fixes six variables in the $3$-uniform case, rather than only the three vertex variables.

The variance argument below can be read as follows. Each local Hoeffding component supported on a family $R\subseteq\calA_k$ lifts, for an edge $e$, to the corresponding family of global coordinates $\eta_e(R)$. Components supported on different global coordinate families are orthogonal. If the same global family arises from different edges, the $S_k$-invariance of $u$ makes the lifted components agree, so they add rather than cancel. The next lemma formalizes this observation.

\begin{lemma}\label{lem:full-edge-hoeffding-var}
Let $H=(V,E)$ be a nonempty $k$-uniform hypergraph and let $u\in L^2([0,1]^{\calA_k})$ be invariant under the natural action of $S_k$ on $\calA_k$. For each edge $e\in E$, choose an ordering $\eta_e\colon[k]\to e$ and write $S_H(u)\coloneqq\sum_{e\in E}u(X_e^\circ)$. 
Then
\[
  |E|\,\norm{u-\bar u}_2^2 \le \Var(S_H(u)).
\]
In particular, if $S_H(u)$ is almost surely constant, then $u$ is constant almost everywhere.
\end{lemma}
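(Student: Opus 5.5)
We apply \Cref{prop:finite-hoeffding} to $u$ on the local index set $I=\calA_k$. This gives the decomposition $u=\bar u+\sum_{\emptyset\ne R\subseteq\calA_k}u_R(x_R)$, with each $u_R$ degenerate in every coordinate of $R$ and $\norm{u-\bar u}_2^2=\sum_{R\ne\emptyset}\norm{u_R}_2^2$. By the equivariance remark, $u_{\pi R}$ is obtained from $u_R$ by relabelling coordinates via $\pi$.

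For each edge $e$ we lift this decomposition. We write $u(X_e^\circ)=\bar u+\sum_{R}u_R\bigl((X_{\eta_e(A)})_{A\in R}\bigr)$. Here $\eta_e(R)\coloneqq\{\eta_e(A)\colon A\in R\}$ is a family of nonempty subsets of $V$ of size less than $k$. Since $\eta_e$ is injective on $[k]$, the map $A\mapsto\eta_e(A)$ is injective on $\calA_k$. Each lifted term is therefore a function of the distinct global coordinates $X_{\mathcal B}$ with $\mathcal B=\eta_e(R)$, and it is degenerate in each of them.

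We then group the terms of $S_H(u)-m\bar u$ according to the global family $\mathcal B$:
\[
  S_H(u)-m\bar u=\sum_{\mathcal B\ne\emptyset}T_{\mathcal B},\qquad
  T_{\mathcal B}\coloneqq\sum_{(e,R)\colon \eta_e(R)=\mathcal B}u_R\circ\eta_e^{-1}.
\]
The variables $T_{\mathcal B}$ are mutually orthogonal. Take $\mathcal B\ne\mathcal B'$ and pick, say, $B\in\mathcal B\setminus\mathcal B'$. Integrating over $X_B$ kills every summand of $T_{\mathcal B}$ and leaves every summand of $T_{\mathcal B'}$ unchanged, because the $X_B$ are independent. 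Hence $\Var(S_H(u))=\sum_{\mathcal B}\norm{T_{\mathcal B}}_2^2$.

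The key step, and the main potential obstacle, is to show that the summands inside one $T_{\mathcal B}$ coincide, so that they add rather than cancel. Suppose $\eta_e(R)=\eta_f(R')=\mathcal B$. The union $\bigcup\mathcal B$ lies in $e\cap f$, but $e\ne f$ is possible, and $\eta_f^{-1}\circ\eta_e$ is only a partial bijection from $\eta_e^{-1}(e\cap f)$ onto $\eta_f^{-1}(e\cap f)$. I would extend it to some $\pi\in S_k$. Then $\pi$ maps each $A\in R$ to $\eta_f^{-1}(\eta_e(A))$, so $\pi R=R'$. Moreover, the lifted functions agree as functions of $X_{\mathcal B}$: the equivariance $u_{\pi R}(x)=u_R(\pi^{-1}\cdot x)$ is exactly what matches the coordinate $X_{\eta_e(A)}=X_{\eta_f(\pi A)}$. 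So all summands of $T_{\mathcal B}$ equal a single function $g_{\mathcal B}$ with $\norm{g_{\mathcal B}}_2=\norm{u_R}_2$. Writing $c_{\mathcal B}$ for the number of pairs $(e,R)$ with $\eta_e(R)=\mathcal B$, this gives $\norm{T_{\mathcal B}}_2^2=c_{\mathcal B}^2\norm{u_R}_2^2$. Within one edge, $R\mapsto\eta_e(R)$ is injective, so distinct pairs with the same $\mathcal B$ come from distinct edges. I will check the index conventions of the equivariance carefully here, since the sign of $\pi$ versus $\pi^{-1}$ is where errors creep in.

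Finally, since each $c_{\mathcal B}\ge1$ is an integer, we have $c_{\mathcal B}^2\ge c_{\mathcal B}$. Therefore
\[
  \Var(S_H(u))\ge\sum_{\mathcal B}\sum_{(e,R)\mapsto\mathcal B}\norm{u_R}_2^2=\sum_{e\in E}\sum_{R\ne\emptyset}\norm{u_R}_2^2=|E|\,\norm{u-\bar u}_2^2.
\]
The case of an almost surely constant $S_H(u)$ follows immediately.
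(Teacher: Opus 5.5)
Your proposal is correct and follows essentially the same route as the paper: a local Hoeffding decomposition on $\calA_k$, lifting via $\eta_e$, orthogonality across distinct global coordinate families, identifying coinciding lifted components by extending $\eta_f^{-1}\circ\eta_e$ to some $\pi\in S_k$ and using equivariance, and finally $c_{\mathcal B}^2\ge c_{\mathcal B}$. The differences are cosmetic: you prove cross-family orthogonality by integrating out one coordinate $X_B$ rather than citing the global Hoeffding decomposition, and you extend the partial bijection from $\eta_e^{-1}(e\cap f)$ rather than from $I_R$; your convention $u_{\pi R}(x)=u_R(\pi^{-1}\cdot x)$ is the correct one.
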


\begin{proof}
Apply \Cref{prop:finite-hoeffding} to the local coordinate set $\calA_k$:
\begin{equation}\label{eq:full-edge-local-hoeffding}
  u=\bar u+
  \sum_{\emptyset\ne R\subseteq\calA_k}u_R,
\end{equation}
where $u_R$ depends only on the local coordinates in $R$ and is degenerate in each of them.  For an edge $e$ and $A\in\calA_k$, put
\[
  \eta_e(A)\coloneqq\{\eta_e(i)\colon i\in A\}.
\]
For a nonempty $R\subseteq\calA_k$, write
\[
  \eta_e(R)\coloneqq\{\eta_e(A)\colon A\in R\}.
\]
This is a family of global coordinates.  Denote by
\[
  u_{e,R}\coloneqq u_R\bigl((X_{\eta_e(A)})_{A\in R}\bigr)
\]
the corresponding lifted function.  Then
\begin{equation}\label{eq:full-edge-expanded-sum}
  S_H(u)-|E|\bar u
  =\sum_{e\in E}\sum_{\emptyset\ne R\subseteq\calA_k}u_{e,R}.
\end{equation}
Each $u_{e,R}$ is a Hoeffding-degenerate function of precisely the global coordinates in the family $\eta_e(R)$.  Therefore, applying \Cref{prop:finite-hoeffding} to the full independent coordinate system
\[
  \{X_B\colon\emptyset\ne B\subseteq V,\ |B|<k\},
\]
shows that $u_{e,R}$ and $u_{f,R'}$ are orthogonal whenever $\eta_e(R)\ne\eta_f(R')$.

It remains to check that terms supported on the same global coordinate family cannot cancel. Suppose
\[
  \eta_e(R)=\eta_f(R')\eqqcolon G.
\]
We claim that the lifted components agree almost everywhere:
\[
  u_{e,R}=u_{f,R'}.
\]
Indeed, set
\[
  I_R\coloneqq\bigcup_{A\in R}A,
  \qquad\text{and}\qquad
  I_{R'}\coloneqq\bigcup_{A'\in R'}A'.
\]
Taking unions in the equality $\eta_e(R)=\eta_f(R')$ gives
\[
  \eta_e(I_R)=\eta_f(I_{R'}).
\]
Thus
\[
  \theta\coloneqq\eta_f^{-1}\circ\eta_e\colon I_R\to I_{R'}
\]
is a well-defined bijection.  If $A\in R$, then $\eta_e(A)\in G$, so there is a unique $A'\in R'$ with $\eta_f(A')=\eta_e(A)$, and by definition this $A'$ is $\theta(A)$.  Hence $R'=\theta R$.  Extend $\theta$ arbitrarily to a permutation $\pi\in S_k$; then $\pi R=R'$ and
\[
  \eta_f(\pi A)=\eta_e(A)\qquad\text{for }A\in R.
\]
By the equivariance described after \Cref{prop:finite-hoeffding}, with the action convention above, the component $u_{\pi R}$ is the relabelled copy of $u_R$, namely
\[
  u_{\pi R}\bigl((t_{\pi^{-1}B})_{B\in\pi R}\bigr)
  =u_R\bigl((t_A)_{A\in R}\bigr)
\]
for almost every abstract tuple $(t_A)_{A\in R}$.  Take $t_A=X_{\eta_e(A)}$.  If $B=\pi A$, then $t_{\pi^{-1}B}=X_{\eta_e(A)}=X_{\eta_f(B)}$. Since $R'=\pi R$ and $u_{R'}=u_{\pi R}$, the last display gives
\[
  u_{f,R'}
  =u_{R'}\bigl((X_{\eta_f(A')})_{A'\in R'}\bigr)
  =u_R\bigl((X_{\eta_e(A)})_{A\in R}\bigr)
  =u_{e,R}.
\]
This proves the claim.

Consequently, inside a fixed global coordinate family $G$, all summands in \eqref{eq:full-edge-expanded-sum} are the same function.  Write this common function as $v_G$, and let $n_G$ be its multiplicity.

The global Hoeffding subspaces indexed by distinct nonempty coordinate families $G$ are mutually orthogonal.  Therefore
\[
\begin{aligned}
  \Var(S_H(u))
  =\sum_G n_G^2\norm{v_G}_2^2 \ge \sum_G n_G\norm{v_G}_2^2 
  =\sum_{e\in E}\sum_{\emptyset\ne R\subseteq\calA_k}\norm{u_R}_2^2 
  =|E|\,\norm{u-\bar u}_2^2,
\end{aligned}
\]
where the last equality follows from the local orthogonal decomposition \eqref{eq:full-edge-local-hoeffding}.  If $S_H(u)$ is constant, then the variance is zero, and the displayed lower bound forces $u=\bar u$ almost everywhere.
\end{proof}

\begin{proof}[Proof of Theorem~\ref{thm:hypergraph-intro}\textup{(ii)}]
Let $Z\coloneqq t(H,W)$. If $Z=0$, there is nothing to prove, so assume $Z>0$. Constancy of the full-edge-rooted density gives $r_e^W\equiv Z$ for every $e\in E(H)$. Since the other factors in the rooted integral are at most $1$, we have $W\ge Z$ almost everywhere on its full local coordinate space. Hence $u\coloneqq\log W$ is bounded.

Let $\mu$ be the $H$-Gibbs measure with density $Z^{-1}\prod_{e\in E(H)}W(X_e^\circ)$ with respect to the product Lebesgue measure, denoted here by $\lambda$. The full-edge-rooted hypotheses say that every full-edge marginal is Lebesgue measure on $[0,1]^{\calA_k}$. Therefore
\[
  \Ent(\mu\Vert\lambda)=m\int u-\log Z\ge0,
\]
while Jensen gives
\[
  \log Z=
  \log\bbE\exp\left(\sum_{e\in E(H)}u(X_e^\circ)\right)
  \ge m\int u.
\]
Equality holds, and the equality case of Jensen yields
\[
  \sum_{e\in E(H)}u(X_e^\circ)=\log Z
\]
almost surely.  By \Cref{lem:full-edge-hoeffding-var}, this forces $u$ to be constant, and hence $W=p$ almost everywhere with $p=Z^{1/m}$.
\end{proof}

Keeping the entropy and Jensen deficits gives the analogous stability statement for full hypergraphons.

For a $k$-uniform hypergraph $H$ and an $S_k$-invariant hypergraphon $W$ with $t(H,W)>0$, define the \emph{total marginal error} by
\[
  \Delta_H(W)\coloneqq\sum_{e\in E(H)}\norm{q_e^W-1}_{L^1([0,1]^{\calA_k})}.
\]

\begin{theorem}
Let $H$ be a finite $k$-uniform hypergraph with $k\ge2$ and $m=|E(H)|\ge1$, and let $W\colon[0,1]^{\calA_k}\to[0,1]$ be an $S_k$-invariant $k$-uniform hypergraphon. Suppose $a\le W\le1$ almost everywhere for some $a\in(0,1]$, and put $M\coloneqq\log(1/a)$. Then
\[
  \norm{\log W-\int\log W}_2^2
  \le \frac1m(2e^{mM}+m^2M^2)M\Delta_H(W).
\]
Consequently, if $p\coloneqq\exp(\int\log W)$, then
\[
  \max\left\{ \norm{W-p}_2^2,~\norm{W-p}_1^2 \right\} \le \frac1m(2e^{mM}+m^2M^2)M\Delta_H(W). 
\]
\end{theorem}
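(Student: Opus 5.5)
The argument is the same two-deficit scheme used in \Cref{thm:graph-stability} and in the hyperkernel stability theorem; only the coordinate system changes. Put $Z\coloneqq t(H,W)$, which is at least $a^m>0$ because $W\ge a$. Let $u\coloneqq\log W$, so $|u|\le M$, and let $\bar u\coloneqq\int u$. Define $S\coloneqq\sum_{e\in E(H)}u(X_e^\circ)$, where the global coordinates $X_B$, for $\emptyset\ne B\subseteq V$ with $|B|<k$, are independent and uniform. Let $\mu$ be the $H$-Gibbs measure with density $Z^{-1}\prod_{e}W(X_e^\circ)$ with respect to the product Lebesgue measure $\lambda$ on these global coordinates.

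\emph{Entropy step.} Write $\log$ of the Gibbs density as $\sum_e u(X_e^\circ)-\log Z$ and integrate it against $\mu$. Each term $u(X_e^\circ)$ depends only on the full-edge coordinates $(X_{\eta_e(A)})_{A\in\calA_k}$, whose marginal under $\mu$ has density $q_e^W$. Therefore
\[
  \Ent(\mu\Vert\lambda)=-J+\sum_{e\in E(H)}\int_{[0,1]^{\calA_k}}u\,(q_e^W-1),
  \qquad J\coloneqq\log Z-m\bar u .
\]
Jensen's inequality gives $J\ge0$. Nonnegativity of entropy together with $|u|\le M$ then gives $J\le M\Delta_H(W)$.

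\emph{Variance step.} Set $X\coloneqq S-\bbE S=S-m\bar u$. Then $|X|\le mM$ and $\log\bbE e^X=\log Z-m\bar u=J$. By \Cref{lem:exp-deficit},
\[
  \Var(S)\le(2e^{mM}+m^2M^2)J\le(2e^{mM}+m^2M^2)M\Delta_H(W).
\]
We have $S=S_H(u)$ with $u$ invariant under $S_k$, since $W$ is. \Cref{lem:full-edge-hoeffding-var} therefore gives $m\norm{u-\bar u}_2^2\le\Var(S)$. Dividing by $m$ yields the first displayed inequality.

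\emph{Conversion step.} Put $p=e^{\bar u}$. Both $u$ and $\bar u$ lie in $[-M,0]$, and $\exp$ is $1$-Lipschitz on $(-\infty,0]$, so $|W-p|\le|u-\bar u|$ pointwise. Hence $\norm{W-p}_2\le\norm{u-\bar u}_2$. Cauchy--Schwarz on the probability space $[0,1]^{\calA_k}$ gives $\norm{W-p}_1\le\norm{W-p}_2$, which yields the second display.

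The only step needing care is the entropy identity. We must check that the marginal of $\mu$ on the full-edge coordinates of $e$ indeed has density $r_e^W/Z=q_e^W$ under the canonical version of $r_e^W$. This is immediate from Fubini, because $r_e^W$ is defined by fixing exactly these coordinates and integrating out all the others. Everything else is a direct transcription of the proof of \Cref{thm:graph-stability}, with \Cref{lem:full-edge-hoeffding-var} in place of \Cref{lem:graph-var}.
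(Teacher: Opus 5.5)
Your proposal is correct and follows the paper's proof essentially step for step. The Gibbs entropy identity bounds the Jensen deficit $J$ by $M\Delta_H(W)$, \Cref{lem:exp-deficit} converts this into $\Var(S)\le(2e^{mM}+m^2M^2)M\Delta_H(W)$, \Cref{lem:full-edge-hoeffding-var} gives $m\norm{u-\bar u}_2^2\le\Var(S)$, and the $1$-Lipschitz property of the exponential on $[-M,0]$ finishes. Your explicit Fubini check, that the full-edge marginal of $\mu$ has density $q_e^W$, is precisely the point the paper leaves implicit when it says ``exactly as in the graphon proof.''
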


\begin{proof}
Put $Z\coloneqq t(H,W)>0$, $u\coloneqq\log W$, and $\bar u\coloneqq\int u$, and let
\[
  S\coloneqq\sum_{e\in E(H)}u(X_e^\circ).
\]
Let $\mu$ be the corresponding $H$-Gibbs measure, and denote the underlying product Lebesgue measure by $\lambda$.
The full-edge Gibbs entropy identity gives, exactly as in the graphon proof,
\[
  \Ent(\mu\Vert\lambda)
  =-J+\sum_{e\in E(H)}\int u(q_e^W-1),
  \qquad\text{where}\qquad
  J\coloneqq\log Z-m\bar u.
\]
Since entropy is nonnegative and $|u|\le M$, we have
\[
  J\le M\Delta_H(W).
\]
For $X\coloneqq S-\bbE S$, we have $|X|\le mM$ and $\log\bbE e^X=J$.  Hence \Cref{lem:exp-deficit} gives
\[
  \Var(S)\le(2e^{mM}+m^2M^2)M\Delta_H(W).
\]
Finally, \Cref{lem:full-edge-hoeffding-var} gives
\[
  m\norm{u-\bar u}_2^2\le\Var(S),
\]
and the stated logarithmic stability bound follows.  The final $L^2$ and $L^1$ conclusions follow because the exponential map is $1$-Lipschitz on $[-M,0]$.
\end{proof}

\section{Directed kernels and tournamentons}\label{sec:directed}

For a nonsymmetric kernel, the one-coordinate terms in the Hoeffding decomposition need not agree. This produces the Eulerian family in the directed problem. We first classify that family and prove stability away from it. We then impose the tournamenton identity and recover rigidity.

\subsection{Directed kernels}

A finite \emph{oriented graph} $D=(V,A)$ has no loops, repeated arcs, or pairs of opposite arcs. A directed kernel is a measurable function $W\colon[0,1]^2\to[0,1]$, not assumed symmetric. We use this one-function model; it does not cover general digraphons that allow both orientations of the same pair. Isolated vertices do not affect the density equations. Define
\begin{equation}\label{eq:digraph-density}
  t(D,W)\coloneqq\int_{[0,1]^V}\prod_{(i,j)\in A}W(x_i,x_j)\prod_{i\in V}dx_i.
\end{equation}
For an arc $a=(i,j)$, the arc-rooted density $r_a^W(x,y)$ is obtained by fixing $x_i=x$ and $x_j=y$ and integrating the remaining vertex variables. When $t(D,W)>0$, the \emph{normalized arc marginal} is
\[
  q_a^W(x,y)\coloneqq\frac{r_a^W(x,y)}{t(D,W)}.
\]

The two-variable Hoeffding decomposition~\cite{Hoeffding1948} does not require symmetry. Every $u\in L^2([0,1]^2)$ can be written uniquely as
\begin{equation}\label{eq:directed-decomp}
  u(x,y)=\bar u+f(x)+g(y)+h(x,y),
\end{equation}
where $\bar u\coloneqq\int u$, the functions $f$ and $g$ have mean zero, and $h$ has zero integral in each coordinate. The four terms are orthogonal, so
\[
  \norm{u-\bar u}_2^2=\norm{f}_2^2+\norm{g}_2^2+\norm{h}_2^2.
\]
Explicitly, $f(x)\coloneqq\int u(x,y)\,dy-\bar u$ and $g(y)\coloneqq\int u(x,y)\,dx-\bar u$. These formulas are the continuous analogue of subtracting row and column averages from a matrix.

Write $d^+(v)$ and $d^-(v)$ for the outdegree and indegree of $v$. Recall that $D$ is Eulerian if $d^+(v)=d^-(v)$ at every non-isolated vertex. For later use, define the \emph{degree gap}
\begin{equation}\label{eq:gamma-D-def}
  \gamma_D\coloneqq\inf_{(\alpha,\beta)\ne(0,0)}
  \frac{\sum_{v\in V}(d^+(v)\alpha+d^-(v)\beta)^2}{\alpha^2+\beta^2}.
\end{equation}
We have $\gamma_D>0$ exactly when $D$ is not Eulerian. Indeed, $(1,-1)$ gives zero when $D$ is Eulerian. Conversely, if the infimum is zero, all nonzero degree vectors $(d^+(v),d^-(v))$ lie on one line. Their sum is $(m,m)$, so this line is spanned by $(1,1)$ and $D$ is Eulerian.

We now classify the kernels whose copies have constant sum on an oriented graph.

\begin{proposition}\label{lem:directed-pairsum}
Let $D=(V,A)$ be a finite oriented graph with $m=|A|\ge1$, and let $u\in L^2([0,1]^2)$.  If
\[
  \sum_{(i,j)\in A}u(x_i,x_j)=C
\]
for almost every $\mathbf x\in[0,1]^V$, then:
\begin{enumerate}[label=\textup{(\roman*)}]
  \item if $D$ is not Eulerian, then $u$ is constant almost everywhere;
  \item if $D$ is Eulerian, then $u(x,y)=c+\phi(x)-\phi(y)$ for some $c\in\R$ and $\phi\in L^2([0,1])$.
\end{enumerate}
\end{proposition}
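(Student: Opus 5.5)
We follow the proof of \Cref{lem:graph-var}, now using the asymmetric decomposition \eqref{eq:directed-decomp}. Write $u=\bar u+f(x)+g(y)+h(x,y)$ and let $(X_v)_{v\in V}$ be independent uniform variables. Then
\[
  \sum_{(i,j)\in A}u(X_i,X_j)-m\bar u
  =\sum_{v\in V}\bigl(d^+(v)f(X_v)+d^-(v)g(X_v)\bigr)+\sum_{(i,j)\in A}h(X_i,X_j).
\]
The one-coordinate part and the two-coordinate part are orthogonal. The terms $h(X_i,X_j)$ for distinct arcs are mutually orthogonal. If two arcs share a vertex, degeneracy of $h$ in each coordinate gives zero covariance; note that arcs sharing both endpoints do not occur, since $D$ is oriented. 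If the arcs are disjoint, independence gives zero covariance. The one-coordinate terms at distinct vertices are orthogonal as well. Hence
\[
  \Var\Bigl(\sum_{(i,j)\in A}u(X_i,X_j)\Bigr)
  =\sum_{v\in V}\norm{d^+(v)f+d^-(v)g}_2^2+m\norm{h}_2^2 .
\]
Under the hypothesis the left side is $0$. It follows that $h=0$ and that $d^+(v)f+d^-(v)g=0$ in $L^2$ for every $v$.

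To exploit the vertex conditions, set $a\coloneqq\norm{f}_2^2$, $b\coloneqq\norm{g}_2^2$ and $c\coloneqq\langle f,g\rangle$. Then
\[
  \sum_v\norm{d^+(v)f+d^-(v)g}_2^2=Q\begin{pmatrix}a&c\\c&b\end{pmatrix},
\]
where $Q(M)\coloneqq\sum_v (d^+(v),d^-(v))\,M\,(d^+(v),d^-(v))^{\mathsf T}$. The Gram matrix is positive semidefinite, and its eigenvalues sum to $a+b$. The quadratic form $(\alpha,\beta)\mapsto\sum_v(d^+(v)\alpha+d^-(v)\beta)^2$ is at least $\gamma_D(\alpha^2+\beta^2)$. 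Expanding the Gram matrix in its eigenbasis therefore gives
\[
  \sum_v\norm{d^+(v)f+d^-(v)g}_2^2\ge\gamma_D\bigl(\norm f_2^2+\norm g_2^2\bigr).
\]
A more elementary way to see this is to expand $f$ and $g$ in an orthonormal basis of $L^2$ and apply the definition of $\gamma_D$ coefficientwise, which is the cleanest route.

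In case (i), $\gamma_D>0$ by the discussion after \eqref{eq:gamma-D-def}. Thus $f=g=0$, and together with $h=0$ this shows $u=\bar u$ is constant.

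In case (ii), $D$ is Eulerian, so every non-isolated vertex has $d^+(v)=d^-(v)\ge1$. Since $m\ge1$, such a vertex exists, and the condition at it reads $d^+(v)(f+g)=0$. Hence $g=-f$. With $h=0$ this gives $u(x,y)=\bar u+f(x)-f(y)$, so we may take $c=\bar u$ and $\phi=f\in L^2$.

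I expect the only delicate step to be the orthogonality bookkeeping in the variance identity. Specifically, one must check that an $h$-term and an $f$- or $g$-term at a shared vertex are orthogonal; this holds because $h$ integrates to zero in each coordinate. The $\gamma_D$ lower bound in case (i) is a direct computation. It is not strictly needed for the exact statement, since the vertex identities alone suffice: a vertex whose degree vector is not parallel to $(1,1)$, together with the sum over all vertices, already forces $f=g=0$.
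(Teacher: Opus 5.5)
Your proof is correct and follows the paper's argument essentially step for step. Both use the directed Hoeffding decomposition, derive the variance identity \eqref{eq:directed-var} using the absence of opposite arcs, and split into the same non-Eulerian and Eulerian cases. The only difference is cosmetic. The paper obtains $\sum_v\norm{d^+(v)f+d^-(v)g}_2^2\ge\gamma_D(\norm f_2^2+\norm g_2^2)$ by applying the definition of $\gamma_D$ pointwise to $(\alpha,\beta)=(f(x),g(x))$ and integrating, which is shorter than your Gram-matrix or basis-expansion route. Your closing remark is also a valid and even more elementary alternative: summing the vertex identities gives $f+g=0$, and one vertex with $d^+(v)\ne d^-(v)$ then forces $f=g=0$.
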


\begin{proof}
Use \eqref{eq:directed-decomp}, let $X_v$ be independent uniform variables, and put
\[
  S_D\coloneqq\sum_{(i,j)\in A}u(X_i,X_j).
\]
Then $S_D-m\bar u$ is equal to
\[
  \sum_{v\in V}(d^+(v)f(X_v)+d^-(v)g(X_v))
  +\sum_{(i,j)\in A}h(X_i,X_j).
\]
The two displayed sums are orthogonal.  The $h$-terms are mutually orthogonal because $D$ has no opposite arcs: disjoint arcs are independent, arcs sharing one endpoint have zero covariance by row or column degeneracy of $h$.  Thus
\begin{equation}\label{eq:directed-var}
  \Var(S_D) = \sum_{v\in V}\norm{d^+(v)f+d^-(v)g}_2^2+m\norm{h}_2^2.
\end{equation}
The hypothesis makes this variance zero. Hence $h=0$ and $d^+(v)f+d^-(v)g=0$ for every non-isolated $v$. If $D$ is not Eulerian, then $\gamma_D>0$. Applying the defining inequality for $\gamma_D$ pointwise to $(\alpha,\beta)=(f(x),g(x))$ and integrating gives $f=g=0$. Thus $u$ is constant. If $D$ is Eulerian, the vertex equations reduce to $d(v)(f+g)=0$. Since some vertex has positive degree, $f+g=0$, and therefore $u=c+f(x)-f(y)$.
\end{proof}

\begin{proof}[Proof of Theorem~\ref{thm:directed-intro}]
Let $Z\coloneqq t(D,W)>0$. Integrating any arc-rooted density gives $Z$, so $r_a^W\equiv Z$ for every arc. The same argument as in the graphon case gives $W\ge Z>0$. Put $u\coloneqq\log W$. The Gibbs entropy and Jensen inequality again force
\[
  \sum_{(i,j)\in A}u(x_i,x_j)=\log Z
\]
almost everywhere.  Apply \Cref{lem:directed-pairsum}.  This gives either $u$ constant or $u=c+\phi(x)-\phi(y)$. In the Eulerian case put $p\coloneqq e^c$. Since $u\in L^\infty$, the difference $\phi(x)-\phi(y)$ is essentially bounded.  More explicitly, if $|\phi(x)-\phi(y)|\le M$ for almost every $(x,y)$, then Fubini gives a point $y_0$ such that $|\phi(x)-\phi(y_0)|\le M$ for almost every $x$.  After changing $\phi$ on a null set and subtracting a constant, we may take $\phi$ bounded and $\int\phi=0$.  Then $c=\int u=\int\log W\le0$, so $p=e^c\le1$.  In the Eulerian case the gauge terms cancel in the template sum, and hence $Z=p^m$; in the non-Eulerian case the same identity is immediate from the constant form of $W$.  Thus in both cases $p=Z^{1/m}$. Exponentiating proves the stated alternatives.

Conversely, if $D$ is Eulerian and $W(x,y)=p\exp(\phi(x)-\phi(y))$, then for every assignment $\mathbf x$,
\[
  \prod_{(i,j)\in A}W(x_i,x_j)
  =p^m\prod_{v\in V}\exp((d^+(v)-d^-(v))\phi(x_v))=p^m.
\]
Thus every rooted density is identically $p^m$.
\end{proof}

\noindent\emph{Remark.}
The function $\phi$ is determined only up to an additive constant, since replacing $\phi$ by $\phi+c$ does not change $W$. The Eulerian alternative includes nonconstant kernels: if $C_\ell^{\to}$ is a directed cycle and $\phi$ is bounded and nonconstant, then, for all sufficiently small $p>0$, the kernel $W(x,y)=p\exp(\phi(x)-\phi(y))$ takes values in $[0,1]$, is nonconstant, and has every arc-rooted $C_\ell^{\to}$-density equal to $p^\ell$.

For a non-Eulerian template, the positive degree gap controls both one-coordinate terms in the directed Hoeffding decomposition. Combining this observation with the entropy-deficit estimate gives the following theorem.

For an oriented graph $D$ and a directed kernel $W$ with $t(D,W)>0$, define the \emph{total marginal error} by
\[
  \Delta_D(W)\coloneqq\sum_{b\in A(D)}\norm{q_b^W-1}_{L^1([0,1]^2)}.
\]

\begin{theorem}\label{thm:directed-stability}
Let $D$ be a finite non-Eulerian oriented graph with $m\ge1$ arcs, and let $\gamma_D$ be defined by \eqref{eq:gamma-D-def}. Suppose $0<\alpha\le W\le1$ and put $M\coloneqq\log(1/\alpha)$. Then $\gamma_D>0$ and, with $\bar u\coloneqq\int\log W$,
\begin{equation}\label{eq:directed-stability}
  \norm{\log W-\bar u}_2^2
  \le \max\left\{\frac1{\gamma_D},~\frac1m\right\}
  (2e^{mM}+m^2M^2)M\Delta_D(W).
\end{equation}
\end{theorem}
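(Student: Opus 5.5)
The proof keeps the two deficits exactly as in \Cref{thm:graph-stability}. Only the variance lower bound changes: \Cref{lem:graph-var} is replaced by a quantitative version of the identity \eqref{eq:directed-var}. That $\gamma_D>0$ for non-Eulerian $D$ was already shown after \eqref{eq:gamma-D-def}.

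\emph{Step 1 (entropy and Jensen).} Put $u\coloneqq\log W$, so $|u|\le M$, and let $Z\coloneqq t(D,W)\ge\alpha^m>0$. Define the Jensen deficit $J\coloneqq\log Z-m\bar u\ge0$. The Gibbs measure $\mu_{D,W}$ has density $Z^{-1}\prod_{(i,j)\in A}W(x_i,x_j)$. Its entropy is
\[
  \Ent(\mu_{D,W}\Vert\lambda^V)=-J+\sum_{b\in A}\int u\,(q_b^W-1).
\]
This is the same computation as in \Cref{thm:graph-stability}, with arcs in place of edges and no use of symmetry. Nonnegativity of entropy and $|u|\le M$ then give $J\le M\Delta_D(W)$.

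\emph{Step 2 (exponential deficit).} Let $S_D\coloneqq\sum_{(i,j)\in A}u(X_i,X_j)$ and $X\coloneqq S_D-\bbE S_D$. Then $|X|\le mM$ and $\log\bbE e^X=J$. \Cref{lem:exp-deficit} gives
\[
  \Var(S_D)\le(2e^{mM}+m^2M^2)M\Delta_D(W).
\]

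\emph{Step 3 (variance lower bound).} This is the only new point. Decompose $u=\bar u+f(x)+g(y)+h(x,y)$ as in \eqref{eq:directed-decomp}. The exact formula \eqref{eq:directed-var} gives
\[
  \Var(S_D)=\sum_{v}\norm{d^+(v)f+d^-(v)g}_2^2+m\norm{h}_2^2.
\]
Apply the definition of $\gamma_D$ pointwise with $(\alpha,\beta)=(f(x),g(x))$ and integrate over $x$. This yields
\[
  \sum_v\norm{d^+(v)f+d^-(v)g}_2^2\ge\gamma_D\bigl(\norm f_2^2+\norm g_2^2\bigr).
\]
Hence
\[
  \Var(S_D)\ge\min\{\gamma_D,m\}\bigl(\norm f_2^2+\norm g_2^2+\norm h_2^2\bigr)=\min\{\gamma_D,m\}\norm{u-\bar u}_2^2,
\]
by orthogonality of the decomposition.

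Since $1/\min\{\gamma_D,m\}=\max\{1/\gamma_D,1/m\}$, combining Steps 2 and 3 proves \eqref{eq:directed-stability}. The only delicate point is Step 3: the one-coordinate terms $f$ and $g$ are not forced to coincide, so a lower bound needs the degree gap. The pointwise use of the quadratic-form inequality is legitimate because the infimum defining $\gamma_D$ bounds the form for every real pair $(\alpha,\beta)$, including $(0,0)$ trivially. Everything else repeats the graphon argument verbatim.
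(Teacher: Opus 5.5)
Your proposal is correct and follows the paper's proof essentially step for step: the entropy bound $J\le M\Delta_D(W)$, then \Cref{lem:exp-deficit} applied to $S_D-m\bar u$, then the lower bound $\Var(S_D)\ge\gamma_D(\norm{f}_2^2+\norm{g}_2^2)+m\norm{h}_2^2$ obtained from \eqref{eq:directed-var} by applying the degree-gap inequality pointwise. Your explicit passage through $\min\{\gamma_D,m\}$ is exactly the step the paper compresses into its appeal to the norm identity after \eqref{eq:directed-decomp}.
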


\begin{proof}
The discussion after \eqref{eq:gamma-D-def} gives $\gamma_D>0$. Since $W\ge\alpha>0$, put $Z\coloneqq t(D,W)>0$ and $u\coloneqq\log W$. Then $-M\le u\le0$. Let
\[
  S\coloneqq\sum_{(i,j)\in A(D)}u(X_i,X_j),
  \qquad\text{and}\qquad
  \bar u\coloneqq\int_{[0,1]^2}u.
\]
Let $\mu$ be the $D$-Gibbs measure with density $Z^{-1}e^S$ with respect to the product Lebesgue measure, denoted here by $\lambda$. Its entropy is
\[
  \Ent(\mu\Vert\lambda)
  =\sum_{b\in A(D)}\int_{[0,1]^2}u(x,y)q_b^W(x,y)\,dxdy-\log Z\ge0.
\]
Define
\[
  J\coloneqq\log Z-m\bar u.
\]
Jensen's inequality gives $J\ge0$.  The entropy inequality gives
\[
  J\le\sum_{b\in A(D)}\int_{[0,1]^2}u(x,y)(q_b^W(x,y)-1)\,dxdy
  \le M\Delta_D(W),
\]
because $|u|\le M$.  For $X\coloneqq S-m\bar u$, we have $\bbE X=0$, $|X|\le mM$, and
\[
  \log\bbE e^X=\log Z-m\bar u=J.
\]
By \Cref{lem:exp-deficit},
\[
  \Var(S)=\Var(X)\le(2e^{mM}+m^2M^2)M\Delta_D(W).
\]
Use the directed decomposition $u=\bar u+f(x)+g(y)+h(x,y)$.  Formula \eqref{eq:directed-var} and the variational definition \eqref{eq:gamma-D-def} yield
\[
  \Var(S)
  \ge \gamma_D(\norm{f}_2^2+\norm{g}_2^2)+m\norm{h}_2^2.
\]
Together with the norm identity following \eqref{eq:directed-decomp}, this proves \eqref{eq:directed-stability}.
\end{proof}

\subsection{Tournamentons}

The directed theorem identifies the Eulerian family as the only positive-density obstruction. Tournamentons satisfy an additional identity that forces every member of this family to be constant.

A \emph{tournamenton} is a measurable function $W\colon[0,1]^2\to[0,1]$ such that
\begin{equation}\label{eq:tournamenton}
  W(x,y)+W(y,x)=1
\end{equation}
for almost every $(x,y)$.  It is the dense limit analogue of a tournament, i.e. an orientation of a complete graph.  If $T=(V,A)$ is a finite tournament, define $t(T,W)$ by the directed formula \eqref{eq:digraph-density}.

\newpage

\noindent\emph{Remark.}
If $D$ is non-Eulerian and $W$ is a tournamenton satisfying the hypotheses of \Cref{thm:directed-stability}, then that theorem gives a linear stability estimate toward the uniform tournamenton. Indeed, put $p\coloneqq e^{\bar u}$, where $\bar u\coloneqq\int\log W$. Since the exponential function is $1$-Lipschitz on $[-M,0]$ and \eqref{eq:tournamenton} gives $\int W=1/2$, we have
\[
  \norm{W-1/2}_2
  \le \norm{W-p}_2+|p-1/2|
  \le 2\norm{W-p}_2
  \le 2\norm{\log W-\bar u}_2.
\]
Consequently,
\[
  \norm{W-1/2}_2^2
  \le 4\max\left\{\frac1{\gamma_D},~\frac1m\right\}
  (2e^{mM}+m^2M^2)M\Delta_D(W).
\]

For $\eta\in[1/2,1]$, the \emph{Condorcet tournamenton} is
\[
  W_\eta(x,y)\coloneqq
  \begin{cases}
    \eta, & x<y,\\
    1-\eta, & x>y,\\
    1/2, & x=y.
  \end{cases}
\]
Two tournamentons $U$ and $W$ are \emph{weakly equivalent} if $t(T,U)=t(T,W)$ for every finite tournament $T$; equivalently, they represent the same tournament limit.

For comparison with recent work, define the \emph{oriented two-point cyclic-triangle density}
\[
  c_W(x,y)\coloneqq W(x,y)\int_0^1W(y,z)W(z,x)\,dz.
\]
Chatterjee and Bhattacharya~\cite[Theorem~2.9]{ChatterjeeBhattacharya} call a tournamenton \emph{triangle-coregular} when the symmetrized quantity $\frac12(c_W(x,y)+c_W(y,x))$ is constant, and they prove that such tournamentons are weakly equivalent to Condorcet tournamentons. Requiring the oriented quantity $c_W(x,y)$ itself to be constant is stronger and, by \Cref{thm:tournament-intro}, forces the uniform tournamenton at positive cyclic-triangle density rather than the full Condorcet family.

The following lemma shows that the tournamenton identity leaves no nonconstant gauge kernel.

\begin{lemma}\label{lem:tournament-gauge}
Let $W$ be a tournamenton.  If $W(x,y)=p\exp(\phi(x)-\phi(y))$ for some $p>0$ and measurable $\phi$, then $W=1/2$ almost everywhere.
\end{lemma}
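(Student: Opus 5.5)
Set $s(x,y)\coloneqq\phi(x)-\phi(y)$, which is antisymmetric, so that $W(x,y)=pe^{s(x,y)}$ and $W(y,x)=pe^{-s(x,y)}$. The tournamenton identity \eqref{eq:tournamenton} then gives
\[
  p\bigl(e^{s(x,y)}+e^{-s(x,y)}\bigr)=1,
  \qquad\text{that is,}\qquad
  \cosh(\phi(x)-\phi(y))=\frac1{2p}
\]
for almost every $(x,y)$. Hence $|\phi(x)-\phi(y)|=c$ almost everywhere, where $c\ge0$ is the constant with $\cosh c=1/(2p)$. (This forces $p\le1/2$.) The plan is to show $c=0$. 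Once that is done, $p=1/2$ and $W=pe^{0}=1/2$ almost everywhere.

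To rule out $c>0$, I would use a three-point argument. Take $X,Y,Z$ independent and uniform. Almost surely each of the three differences $\phi(X)-\phi(Y)$, $\phi(Y)-\phi(Z)$, $\phi(Z)-\phi(X)$ lies in $\{c,-c\}$, because each pair of distinct variables has the uniform product law on $[0,1]^2$ and Fubini applies. These three differences sum to zero. A sum of three terms from $\{\pm c\}$ is an odd multiple of $c$, so it cannot vanish when $c>0$. This is a contradiction, so $c=0$.

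There is one measure-theoretic point to handle: the identity $|\phi(x)-\phi(y)|=c$ must hold simultaneously on all three pairs. I would simply take the union of three null sets in $[0,1]^3$, each being the preimage under a coordinate projection of the exceptional null set in $[0,1]^2$. No further obstacle is expected. The only care needed is that $\phi$ is just measurable, which is why the argument never evaluates $\phi$ pointwise and only uses almost-everywhere identities of the differences.
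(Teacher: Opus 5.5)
Your proposal is correct and matches the paper's proof essentially step for step. The tournamenton identity reduces to $|\phi(x)-\phi(y)|=c$ almost everywhere, and the three-point cocycle argument, with the exceptional null sets combined via Fubini, rules out $c>0$, which then gives $W=1/2$.
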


\begin{proof}
The tournament identity gives
\[
  p\left(e^{\phi(x)-\phi(y)}+e^{\phi(y)-\phi(x)}\right)=1
\]
for almost every $(x,y)$. The function $a\mapsto e^a+e^{-a}$ is even and strictly increasing on $[0,\infty)$, so this identity implies that $|\phi(x)-\phi(y)|$ is equal to a constant $c$ for almost every $(x,y)$. By Fubini, this pairwise relation holds simultaneously for the three pairs $(X,Y)$, $(Y,Z)$, and $(Z,X)$ for almost every triple $(X,Y,Z)$. If $c>0$, then the three differences
\[
  \phi(X)-\phi(Y),\quad \phi(Y)-\phi(Z),\quad\text{and}\quad \phi(Z)-\phi(X)
\]
would each be equal to either $c$ or $-c$, but their sum is identically zero.  No sum of three numbers from $\{c,-c\}$ is zero when $c>0$.  Therefore $c=0$, so $\phi$ is constant almost everywhere.  Then $W$ is constant, and \eqref{eq:tournamenton} gives $W=1/2$.
\end{proof}

For stability, we first measure the distance to the \emph{Eulerian gauge subspace}
\[
  \mathcal G\coloneqq\{c+\phi(x)-\phi(y)\colon c\in\R,\ \phi\in L^2([0,1])\}.
\]
This subspace is closed: after normalizing $\int\phi=0$, the map $\phi\mapsto\phi(x)-\phi(y)$ has squared norm $2\norm{\phi}_2^2$, and its range is orthogonal to the constants.

The quantitative version controls the distance from a tournamenton to the gauge subspace.

\begin{lemma}\label{lem:tournament-gauge-stability}
Let $W$ be a tournamenton such that $0<\alpha\le W\le1$ almost everywhere, put $M\coloneqq\log(1/\alpha)$, and let $\rho\coloneqq\operatorname{dist}_{L^2}(\log W,\mathcal G)$. 
Then
\begin{equation}\label{eq:quantitative-gauge-killing}
  \norm{W-1/2}_2^2
  \le \left(8\cosh^2 M+\frac M8\right)\rho.
\end{equation}
\end{lemma}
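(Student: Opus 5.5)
Write $u\coloneqq\log W$, so that $-M\le u\le 0$. Let $c+a(x,y)$, with $a(x,y)=\phi(x)-\phi(y)$, be the orthogonal projection of $u$ onto $\mathcal G$, and put $e\coloneqq u-c-a$, so that $\norm{e}_2=\rho$. The projection is explicit. Using the directed decomposition \eqref{eq:directed-decomp}, $u=\bar u+f(x)+g(y)+h(x,y)$, one has $c=\bar u$ and $\phi=(f-g)/2$. Since $|f|,|g|\le M$, this gives $|\phi|\le M$ and $|a|\le 2M$. In particular the comparison kernel $V\coloneqq e^{c+a}$ is bounded above and below by explicit exponentials in $M$, and $|W-V|\le e^{M}\max(W,V)\,|e|$ pointwise. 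This is where factors of the type $\cosh M$ should enter.

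\emph{Step 1: use the tournament identity on the gauge part.} From \eqref{eq:tournamenton},
\[
  1=W(x,y)+W(y,x)=2e^{c}\cosh a(x,y)+\bigl(W-V\bigr)(x,y)+\bigl(W-V\bigr)(y,x).
\]
Hence $|2e^c\cosh a-1|\le |W-V|(x,y)+|W-V|(y,x)$. In $L^1$ this is bounded by $C_1(M)\rho$, using $\norm{\cdot}_1\le\norm{\cdot}_2$. Since $\cosh$ is strictly increasing in $|a|$ with derivative $\sinh$, this says that $|a(x,y)|$ is $L^1$-close to the constant $\kappa\ge0$ defined by $2e^c\cosh\kappa=1$. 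This needs $e^{-c}\ge2$, which should follow up to an $O(\rho)$ error from the same display after averaging.

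\emph{Step 2: the three-point argument.} This is the main obstacle. I would adapt the proof of \Cref{lem:tournament-gauge}. For independent uniform $X,Y,Z$, the cocycle identity $a(X,Y)+a(Y,Z)+a(Z,X)=0$ holds exactly. Any three reals $s_1,s_2,s_3$ with zero sum satisfy $\sum_i\bigl||s_i|-\kappa\bigr|\ge\kappa$. Indeed, if all three $|s_i|$ were within $\kappa/3$ of $\kappa$, then their signed sum would have absolute value at least $\kappa-\kappa=\dots$; I would verify the precise constant, aiming for a bound of the form $\ge\kappa/3$. Taking expectations and using Step~1 for each of the three pairs gives $\kappa\le C_2(M)\rho$. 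Combined with Step~1, this yields $\norm{|a|}_1\le C_3(M)\rho$, and since $|a|\le 2M$, also $\norm{a}_2^2\le 2M\norm{a}_1\le C_4(M)\rho$. I expect this sign-pattern step to be the delicate point, because the errors from Step~1 must be transferred to the three pairs simultaneously by Fubini.

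\emph{Step 3: conclude.} Write $W-1/2=\tfrac12\bigl(W(x,y)-W(y,x)\bigr)$. Then compare $W(x,y)-W(y,x)$ with $V(x,y)-V(y,x)=2e^c\sinh a$, up to the error $|W-V|(x,y)+|W-V|(y,x)$. This gives
\[
  \norm{W-1/2}_2^2\le 2\norm{e^c\sinh a}_2^2+2\norm{W-V}_2^2.
\]
For the first term, use $|\sinh a|\le\cosh(2M)|a|$ together with Step~2. For the second term, use $\norm{W-V}_2^2\le C(M)\rho^2\le C(M)\,M\rho$, which holds because $\rho\le\norm{u-\bar u}_2\le M$. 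Collecting the constants should produce a bound of the shape $\bigl(8\cosh^2M+M/8\bigr)\rho$. If the constants come out differently, I would tune the pointwise inequalities rather than change the structure of the argument.
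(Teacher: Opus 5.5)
There is a genuine gap in Steps 1--2, and it concerns the rate, not the Fubini transfer. The Fubini part is harmless, since each of the three pairs is uniform on $[0,1]^2$. Because $\cosh'(0)=0$, $L^1$-smallness of $2e^c\cosh a-1$ at scale $\rho$ does not make $|a|$ $L^1$-close to $\kappa$ at scale $\rho$. In fact your intermediate claims $\kappa\le C_2(M)\rho$ and $\norm{a}_1\le C_3(M)\rho$ are false.

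Take $\psi=\pm1$ on the two halves of $[0,1]$ and $W=\frac12+\epsilon(\psi(x)-\psi(y))$, with $M=\log 4$ fixed and $\epsilon\to0$. The antisymmetric part of $\log W$ is exactly a gauge term. The symmetric part takes two equally likely values differing by $\frac12|\log(1-16\epsilon^2)|$, so $\rho\approx4\epsilon^2$. However, $|a|$ equals $0$ or $\operatorname{artanh}(4\epsilon)$ with probability $1/2$ each, and $2e^c=(1-16\epsilon^2)^{1/4}$ gives $\kappa\approx2\sqrt2\,\epsilon$. Thus $\kappa\asymp\norm{a}_1\asymp\sqrt\rho$, and passing through $\norm{a}_2^2\le2M\norm{a}_1$ yields at best $\norm{W-1/2}_2^2=O(\sqrt\rho)$.

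The missing idea is to work with squares throughout. The inequality $|\cosh s-\cosh t|\ge\frac12|s^2-t^2|$ gives $\norm{a^2-\kappa^2}_1=O(\rho)$. In a zero-sum triple the largest $|s_i|$ equals the sum of the other two, so $\kappa^2\le\sum_i|s_i^2-\kappa^2|$. Hence $\kappa^2\le3\norm{a^2-\kappa^2}_1$ and $\norm{a}_2^2\le\kappa^2+\norm{a^2-\kappa^2}_1=O(\rho)$. This quadratic three-point inequality is exactly what the paper's estimate $x^2+y^2+z^2\le C_R\sum_{\rm cyc}|H(x)-H(y)|+(2C_R+R)|\delta|$, with $H=\log(2\cosh)$, encodes.

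Even with this repair, your route does not deliver the stated constant $8\cosh^2M+M/8$. Working with kernels rather than logarithms costs several factors: $\max(W,V)$ in $|W-V|\le\max(W,V)|e|$, then $e^{-c}\le e^M$ when you normalize $2e^c\cosh a$, then $\cosh^2$ of $\sup|a|$ in Step 3. These compound to several extra exponentials in $M$, and tuning pointwise inequalities does not remove them.

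The paper avoids all of these by reversing which identity is exact and which is approximate. It takes $a$ to be the antisymmetric part of $\log W$ itself, so the tournament identity holds exactly: the symmetric part of $\log W$ equals $-H(a)$, and $2W-1=\tanh a$, so $|\tanh a|\le|a|$ finishes at no cost. The gauge projection $b$ is used only to bound two defects: the cocycle defect $\norm{a(X,Y)+a(Y,Z)+a(Z,X)}_1\le3\rho$ and the symmetric defect $\norm{H(a)+\bar u}_2\le\rho$. Both bounds come for free because symmetrization and antisymmetrization are $L^2$ contractions.
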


\begin{proof}
Put $u\coloneqq\log W$ and use the directed Hoeffding decomposition
\[
  u(x,y)=\bar u+f(x)+g(y)+h(x,y)
\]
from \eqref{eq:directed-decomp}. The orthogonal projection of $u$ onto $\mathcal G$ is
\[
  u_0(x,y)\coloneqq\bar u+\phi(x)-\phi(y),
  \qquad\text{where}\qquad
  \phi\coloneqq\frac{f-g}{2}.
\]
Here $\int\phi=0$ by the normalization in the Hoeffding decomposition. Indeed,
\[
  u-u_0
  =\frac{f+g}{2}(x)+\frac{f+g}{2}(y)+h(x,y),
\]
and this function is orthogonal to every constant and every function of the form $\psi(x)-\psi(y)$.  In particular,
\[
  \norm{u-u_0}_2=\rho.
\]

Define
\[
  s(x,y)\coloneqq\frac{u(x,y)+u(y,x)}2,
  \qquad\text{and}\qquad
  a(x,y)\coloneqq\frac{u(x,y)-u(y,x)}2,
\]
and put $b(x,y)\coloneqq\phi(x)-\phi(y)$. Symmetrization and antisymmetrization are orthogonal contractions on $L^2([0,1]^2)$, so
\[
  \norm{s-\bar u}_2\le\rho,
  \qquad\text{and}\qquad
  \norm{a-b}_2\le\rho.
\]
Since $W$ is a tournamenton,
\[
  e^{s(x,y)+a(x,y)}+e^{s(x,y)-a(x,y)}=1.
\]
Thus, for
\[
  H(t)\coloneqq\log(2\cosh t),
\]
we have
\[
  s=-H(a),
  \qquad\text{and}\qquad
  2W-1=\tanh a.
\]
Moreover, $-M\le u\le0$, and the same bounds hold for $u(y,x)$; hence $|a|\le M/2$ almost everywhere.

The exact gauge difference $b$ satisfies the cocycle identity $b(X,Y)+b(Y,Z)+b(Z,X)=0$. The projection estimate says that $a$ is close to $b$, while the tournament identity says that $H(a)$ is close to a constant. The following elementary estimate converts these two facts into $L^2$ control of $a$.

We use the following elementary three-point estimate. If $|x|,|y|,|z|\le R$ and $\delta\coloneqq x+y+z$, then
\[
  x^2+y^2+z^2
  \le C_R\sum_{\rm cyc}|H(x)-H(y)|
     +(2C_R+R)|\delta|,
  \qquad\text{where}\qquad
  C_R\coloneqq8\cosh^2(2R).
\]
To prove it, first suppose that $\delta=0$.  In this case we prove the slightly sharper bound with $8\cosh^2 R$ in place of $C_R$.  Let $A$ and $C$ be, respectively, the largest and smallest of $|x|,|y|,|z|$.  After permuting the variables and, if necessary, changing all three signs, the triple has the form $(p,q,-p-q)$ with $p,q\ge0$.  Thus $A=p+q$ and $C=\min\{p,q\}\le A/2$.  Since
\[
  \tanh t=\int_0^t\operatorname{sech}^2 r\,dr
  \ge t\operatorname{sech}^2 R
  \qquad\text{for }0\le t\le R,
\]
we obtain
\[
  H(A)-H(C)
  \ge\frac12\operatorname{sech}^2 R(A^2-C^2)
  \ge\frac38\operatorname{sech}^2 R A^2.
\]
As $x^2+y^2+z^2\le3A^2$ and $H$ is even, this proves the asserted sharper zero-sum bound.  In general, set
\[
  x'\coloneqq x-\frac\delta3,
  \qquad
  y'\coloneqq y-\frac\delta3,
  \qquad\text{and}\qquad
  z'\coloneqq z-\frac\delta3.
\]
Then $x'+y'+z'=0$, all three primed variables have absolute value at most $2R$, and
\[
  x^2+y^2+z^2=x'^2+y'^2+z'^2+\frac{\delta^2}{3}.
\]
Applying the zero-sum bound with parameter $2R$ gives the coefficient $C_R=8\cosh^2(2R)$.  The function $H$ is $1$-Lipschitz. Consequently, replacing the three unprimed variables by the primed ones increases the sum of the three pairwise $H$-differences by at most $2|\delta|$.  Finally, $\delta^2/3\le R|\delta|$ because $|\delta|\le3R$. This proves the three-point estimate.

Apply the estimate with $R=M/2$ to
\[
  x\coloneqq a(X,Y),
  \qquad
  y\coloneqq a(Y,Z),
  \qquad\text{and}\qquad
  z\coloneqq a(Z,X),
\]
where $X,Y,Z$ are independent uniform random variables. Because $b(X,Y)+b(Y,Z)+b(Z,X)=0$, the projection bounds above and Minkowski's inequality give
\[
  \norm{a(X,Y)+a(Y,Z)+a(Z,X)}_{L^1}
  \le3\rho.
\]
The tournamenton identity and the projection bounds also give
\[
  \norm{H(a)+\bar u}_2\le\rho.
\]
It follows that the expectation of each of the three pairwise $H$-differences in the three-point estimate is at most $2\rho$. Integrating that inequality, and using $C_{M/2}=8\cosh^2 M$, yields
\[
  3\norm{a}_2^2
  \le6C_{M/2}\rho+3(2C_{M/2}+M/2)\rho,
\]
and hence
\[
  \norm{a}_2^2
  \le(4C_{M/2}+M/2)\rho.
\]
Finally, $2W-1=\tanh a$ and $|\tanh t|\le|t|$ imply
\[
  \norm{W-1/2}_2^2
  =\frac14\norm{\tanh a}_2^2
  \le\frac14\norm{a}_2^2.
\]
The preceding bound on $\norm{a}_2^2$ now gives \eqref{eq:quantitative-gauge-killing}.
\end{proof}

\begin{proof}[Proof of Theorem~\ref{thm:tournament-intro}]
If $t(T,W)=0$, there is nothing to prove. Assume $t(T,W)>0$ and apply \Cref{thm:directed-intro} to the oriented graph underlying $T$. If $T$ is not Eulerian, the directed theorem gives that $W$ is constant, hence $W=1/2$ by \eqref{eq:tournamenton}. If $T$ is Eulerian, the directed theorem gives the gauge form, and \Cref{lem:tournament-gauge} again gives $W=1/2$.
\end{proof}

For regular tournaments, the quantitative gauge estimate gives a weaker exponent.

\begin{corollary}
Let $T$ be a regular finite tournament with $m\ge1$ arcs. Suppose $W$ is a tournamenton with $0<\alpha\le W\le1$ and put $M\coloneqq\log(1/\alpha)$. Then
\begin{equation}\label{eq:regular-tournament-stability}
  \norm{W-1/2}_2^2
  \le \left(8\cosh^2 M+\frac M8\right)
  \left(
    \frac{(2e^{mM}+m^2M^2)M}{m}\,\Delta_T(W)
  \right)^{1/2}.
\end{equation}
\end{corollary}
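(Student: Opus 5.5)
The plan is to chain three earlier estimates: the entropy/Jensen deficit bound, the Eulerian variance identity \eqref{eq:directed-var}, and \Cref{lem:tournament-gauge-stability}. A regular tournament is Eulerian, so $\gamma_T=0$ and \Cref{thm:directed-stability} gives nothing. The only loss will come from replacing $\rho^2$ by $\rho$, which accounts for the square root.

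\emph{Deficit and variance.} Put $u\coloneqq\log W$, $Z\coloneqq t(T,W)>0$ (positive since $W\ge\alpha$), and $S\coloneqq\sum_{(i,j)\in A(T)}u(X_i,X_j)$. Exactly as in the proof of \Cref{thm:directed-stability}, nonnegativity of $\Ent(\mu\Vert\lambda)$ bounds the Jensen deficit by $J\le M\Delta_T(W)$. Then \Cref{lem:exp-deficit}, applied to $S-m\bar u$ (bounded by $mM$), gives
\[
  \Var(S)\le(2e^{mM}+m^2M^2)M\Delta_T(W).
\]

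\emph{Variance controls the gauge distance.} Use the decomposition $u=\bar u+f(x)+g(y)+h(x,y)$ in \eqref{eq:directed-var}. Regularity gives $d^+(v)=d^-(v)=d\coloneqq(|V|-1)/2\ge1$ for every vertex, so
\[
  \Var(S)=|V|d^2\norm{f+g}_2^2+m\norm{h}_2^2 .
\]
From the proof of \Cref{lem:tournament-gauge-stability}, $u-u_0=\tfrac{f+g}{2}(x)+\tfrac{f+g}{2}(y)+h(x,y)$, and the three terms are orthogonal. Hence
\[
  \rho^2=\tfrac12\norm{f+g}_2^2+\norm{h}_2^2 .
\]
Since $m=|V|d$ and $|V|d^2\ge m\ge m/2$, we get $\Var(S)\ge m\rho^2$. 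This step needs care, because it is where the exact coefficient $1/m$ inside the square root is produced: one must check that $|V|d^2\ge m/2$ matches the factor $\tfrac12$ on $\norm{f+g}_2^2$.

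\emph{Conclusion.} Combining the two bounds,
\[
  \rho\le\left(\frac{(2e^{mM}+m^2M^2)M}{m}\Delta_T(W)\right)^{1/2}.
\]
Inserting this into \eqref{eq:quantitative-gauge-killing} yields \eqref{eq:regular-tournament-stability}.
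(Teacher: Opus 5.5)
Your proposal is correct and follows the paper's proof essentially step for step. Both arguments use the same entropy and exponential-deficit bound on $\Var(S)$, and the same identity $\rho^2=\tfrac12\norm{f+g}_2^2+\norm{h}_2^2$ together with the Eulerian form of \eqref{eq:directed-var} to obtain $m\rho^2\le\Var(S)$; where you write $|V|d^2=md\ge m$, the paper writes $\sum_v d(v)^2\ge\sum_v d(v)=m$. Both then conclude with \Cref{lem:tournament-gauge-stability}, and the step you flagged as delicate in fact has a factor-of-two slack, since $|V|d^2\ge m$ already exceeds the needed $m/2$.
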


\begin{proof}
Since $W\ge\alpha$, we have $t(T,W)\ge\alpha^m>0$. Put $u\coloneqq\log W$ and let
\[
  u(x,y)=\bar u+f(x)+g(y)+h(x,y)
\]
be its directed Hoeffding decomposition. As in the proof of \Cref{lem:tournament-gauge-stability}, the squared distance from $u$ to $\mathcal G$ is
\[
  \rho^2
  =\frac12\norm{f+g}_2^2+\norm h_2^2.
\]

Put $d(v)\coloneqq d^+(v)=d^-(v)$, where the second equality holds because $T$ is regular. For
\[
  S\coloneqq\sum_{(i,j)\in A(T)}u(X_i,X_j),
\]
the directed variance identity \eqref{eq:directed-var} becomes
\[
  \Var(S)
  =\left(\sum_{v\in V(T)}d(v)^2\right)\norm{f+g}_2^2
    +m\norm h_2^2.
\]
The degrees $d(v)$ are nonnegative integers and $\sum_vd(v)=m$, so $\sum_vd(v)^2\ge m$. The last two displays therefore give
\[
  m\rho^2\le\Var(S).
\]

The entropy and exponential-deficit calculation in the proof of \Cref{thm:directed-stability} does not use non-Eulerianness.  It gives
\[
  \Var(S)
  \le(2e^{mM}+m^2M^2)M\Delta_T(W).
\]
Consequently,
\[
  \rho
  \le\left(
    \frac{(2e^{mM}+m^2M^2)M}{m}\Delta_T(W)
  \right)^{1/2}.
\]
Apply \Cref{lem:tournament-gauge-stability} to obtain \eqref{eq:regular-tournament-stability}.
\end{proof}

The same proof, with the same constants, applies to every Eulerian oriented graph $D$ when the kernel also satisfies the tournamenton identity. The constants and the H\"older exponent in \eqref{eq:regular-tournament-stability} are not optimized.

\section*{Acknowledgements}

H. L. was supported by the National Natural Science Foundation of China (12501487), the China Scholarship Council, and the Institute for Basic Science (IBS-R029-C4). X.L. was supported by the Excellent Young Talents Program (Overseas) of the National Natural Science Foundation of China.

\section*{Declaration on the use of AI}

The authors used generative AI tools to assist in discussing proof strategies, checking proofs, and improving exposition. 

\bibliographystyle{abbrv}
\bibliography{rooted_density_rigidity}

@article {Aldous1981,
    AUTHOR = {Aldous, David J.},
     TITLE = {Representations for partially exchangeable arrays of random variables},
   JOURNAL = {J. Multivariate Anal.},
  FJOURNAL = {Journal of Multivariate Analysis},
    VOLUME = {11},
      YEAR = {1981},
    NUMBER = {4},
     PAGES = {581--598},
      ISSN = {0047-259X},
  MRNUMBER = {0637937},
       DOI = {10.1016/0047-259X(81)90099-3},
       URL = {https://doi.org/10.1016/0047-259X(81)90099-3},
}

@article {AustinExchangeability,
    AUTHOR = {Austin, Tim},
     TITLE = {On exchangeable random variables and the statistics of large graphs and hypergraphs},
   JOURNAL = {Probab. Surv.},
  FJOURNAL = {Probability Surveys},
    VOLUME = {5},
      YEAR = {2008},
     PAGES = {80--145},
      ISSN = {1549-5787},
  MRNUMBER = {2426176},
       DOI = {10.1214/08-PS124},
       URL = {https://doi.org/10.1214/08-PS124},
}

@article {BCLSV1,
    AUTHOR = {Borgs, Christian and Chayes, Jennifer T. and Lov{\'a}sz, L{\'a}szl{\'o} and S{\'o}s, Vera T. and Vesztergombi, Katalin},
     TITLE = {Convergent sequences of dense graphs. {I}. {S}ubgraph frequencies, metric properties and testing},
   JOURNAL = {Adv. Math.},
  FJOURNAL = {Advances in Mathematics},
    VOLUME = {219},
      YEAR = {2008},
    NUMBER = {6},
     PAGES = {1801--1851},
      ISSN = {0001-8708},
  MRNUMBER = {2455626},
       DOI = {10.1016/j.aim.2008.07.008},
       URL = {https://doi.org/10.1016/j.aim.2008.07.008},
}

@article {BCLSV2,
    AUTHOR = {Borgs, Christian and Chayes, Jennifer T. and Lov{\'a}sz, L{\'a}szl{\'o} and S{\'o}s, Vera T. and Vesztergombi, Katalin},
     TITLE = {Convergent sequences of dense graphs. {II}. {M}ultiway cuts and statistical physics},
   JOURNAL = {Ann. of Math. (2)},
  FJOURNAL = {Annals of Mathematics. Second Series},
    VOLUME = {176},
      YEAR = {2012},
    NUMBER = {1},
     PAGES = {151--219},
      ISSN = {0003-486X},
  MRNUMBER = {2925382},
       DOI = {10.4007/annals.2012.176.1.2},
       URL = {https://doi.org/10.4007/annals.2012.176.1.2},
}

@article {BucicEtAlLocalTournament,
    AUTHOR = {Buci{\'c}, Matija and Long, Eoin and Shapira, Asaf and Sudakov, Benny},
     TITLE = {Tournament quasirandomness from local counting},
   JOURNAL = {Combinatorica},
  FJOURNAL = {Combinatorica. An International Journal on Combinatorics and the Theory of Computing},
    VOLUME = {41},
      YEAR = {2021},
    NUMBER = {2},
     PAGES = {175--208},
      ISSN = {0209-9683},
  MRNUMBER = {4253423},
       DOI = {10.1007/s00493-020-4371-y},
       URL = {https://doi.org/10.1007/s00493-020-4371-y},
}

@article {CGW,
    AUTHOR = {Chung, Fan R. K. and Graham, Ronald L. and Wilson, Richard M.},
     TITLE = {Quasi-random graphs},
   JOURNAL = {Combinatorica},
  FJOURNAL = {Combinatorica. An International Journal on Combinatorics and the Theory of Computing},
    VOLUME = {9},
      YEAR = {1989},
    NUMBER = {4},
     PAGES = {345--362},
      ISSN = {0209-9683},
  MRNUMBER = {1054011},
       DOI = {10.1007/BF02125347},
       URL = {https://doi.org/10.1007/BF02125347},
}

@article {ChungGrahamTournaments,
    AUTHOR = {Chung, Fan R. K. and Graham, Ronald L.},
     TITLE = {Quasi-random tournaments},
   JOURNAL = {J. Graph Theory},
  FJOURNAL = {Journal of Graph Theory},
    VOLUME = {15},
      YEAR = {1991},
    NUMBER = {2},
     PAGES = {173--198},
      ISSN = {0364-9024},
  MRNUMBER = {1106530},
       DOI = {10.1002/jgt.3190150206},
       URL = {https://doi.org/10.1002/jgt.3190150206},
}

@article {ConlonFoxSudakovHereditary,
    AUTHOR = {Conlon, David and Fox, Jacob and Sudakov, Benny},
     TITLE = {Hereditary quasirandomness without regularity},
   JOURNAL = {Math. Proc. Cambridge Philos. Soc.},
  FJOURNAL = {Mathematical Proceedings of the Cambridge Philosophical Society},
    VOLUME = {164},
      YEAR = {2018},
    NUMBER = {3},
     PAGES = {385--399},
      ISSN = {0305-0041},
  MRNUMBER = {3784260},
       DOI = {10.1017/S0305004116001055},
       URL = {https://doi.org/10.1017/S0305004116001055},
}

@article {DiaconisJanson,
    AUTHOR = {Diaconis, Persi and Janson, Svante},
     TITLE = {Graph limits and exchangeable random graphs},
   JOURNAL = {Rend. Mat. Appl. (7)},
  FJOURNAL = {Rendiconti di Matematica e delle sue Applicazioni. Serie VII},
    VOLUME = {28},
      YEAR = {2008},
    NUMBER = {1},
     PAGES = {33--61},
      ISSN = {1120-7183},
  MRNUMBER = {2463439},
}

@article {ElekSzegedyHypergraph,
    AUTHOR = {Elek, G{\'a}bor and Szegedy, Bal{\'a}zs},
     TITLE = {A measure-theoretic approach to the theory of dense hypergraphs},
   JOURNAL = {Adv. Math.},
  FJOURNAL = {Advances in Mathematics},
    VOLUME = {231},
      YEAR = {2012},
    NUMBER = {3-4},
     PAGES = {1731--1772},
      ISSN = {0001-8708},
  MRNUMBER = {2964622},
       DOI = {10.1016/j.aim.2012.06.022},
       URL = {https://doi.org/10.1016/j.aim.2012.06.022},
}

@article {FoxHimwichManiZhou,
    AUTHOR = {Fox, Jacob and Himwich, Zoe and Mani, Nitya and Zhou, Yunkun},
     TITLE = {A note on directed analogues of the {S}idorenko and forcing conjectures},
   JOURNAL = {Electron. J. Combin.},
  FJOURNAL = {The Electronic Journal of Combinatorics},
    VOLUME = {32},
      YEAR = {2025},
    NUMBER = {3},
     PAGES = {Paper No. P3.38, 14},
      ISSN = {1077-8926},
       DOI = {10.37236/11675},
       URL = {https://doi.org/10.37236/11675},
}

@article {GrzesikKralPikhurko,
    AUTHOR = {Grzesik, Andrzej and Kr{\'a}l', Daniel and Pikhurko, Oleg},
     TITLE = {Forcing generalised quasirandom graphs efficiently},
   JOURNAL = {Combin. Probab. Comput.},
  FJOURNAL = {Combinatorics, Probability and Computing},
    VOLUME = {33},
      YEAR = {2024},
    NUMBER = {1},
     PAGES = {16--31},
      ISSN = {0963-5483},
  MRNUMBER = {4680487},
       DOI = {10.1017/S0963548323000263},
       URL = {https://doi.org/10.1017/S0963548323000263},
}

@article {HancockEtAlTournamentForcing,
    AUTHOR = {Hancock, Robert and Kabela, Adam and Kr{\'a}l', Daniel and Martins, Ta{\'i}sa and Parente, Roberto and Skerman, Fiona and Volec, Jan},
     TITLE = {No additional tournaments are quasirandom-forcing},
   JOURNAL = {European J. Combin.},
  FJOURNAL = {European Journal of Combinatorics},
    VOLUME = {108},
      YEAR = {2023},
     PAGES = {Paper No. 103632, 10},
      ISSN = {0195-6698},
  MRNUMBER = {4502205},
       DOI = {10.1016/j.ejc.2022.103632},
       URL = {https://doi.org/10.1016/j.ejc.2022.103632},
}

@article {Hoeffding1948,
    AUTHOR = {Hoeffding, Wassily},
     TITLE = {A class of statistics with asymptotically normal distribution},
   JOURNAL = {Ann. Math. Statistics},
  FJOURNAL = {Annals of Mathematical Statistics},
    VOLUME = {19},
      YEAR = {1948},
    NUMBER = {3},
     PAGES = {293--325},
      ISSN = {0003-4851},
  MRNUMBER = {0026294},
       DOI = {10.1214/aoms/1177730196},
       URL = {https://doi.org/10.1214/aoms/1177730196},
}

@article {HubaiKralParczykPerson,
    AUTHOR = {Hubai, Tam{\'a}s and Kr{\'a}l', Daniel and Parczyk, Olaf and
              Person, Yury},
     TITLE = {More non-bipartite forcing pairs},
   JOURNAL = {Acta Math. Univ. Comenianae},
  FJOURNAL = {Acta Mathematica Universitatis Comenianae. New Series},
    VOLUME = {88},
      YEAR = {2019},
    NUMBER = {3},
     PAGES = {819--825},
      ISSN = {0862-9544},
       URL = {https://www.iam.fmph.uniba.sk/amuc/ojs/index.php/amuc/article/view/1279},
}

@article {JansonQR,
    AUTHOR = {Janson, Svante},
     TITLE = {Quasi-random graphs and graph limits},
   JOURNAL = {European J. Combin.},
  FJOURNAL = {European Journal of Combinatorics},
    VOLUME = {32},
      YEAR = {2011},
    NUMBER = {7},
     PAGES = {1054--1083},
      ISSN = {0195-6698},
  MRNUMBER = {2825535},
       DOI = {10.1016/j.ejc.2011.03.011},
       URL = {https://doi.org/10.1016/j.ejc.2011.03.011},
}

@book {Kallenberg2005,
    AUTHOR = {Kallenberg, Olav},
     TITLE = {Probabilistic Symmetries and Invariance Principles},
    SERIES = {Probability and its Applications (New York)},
 PUBLISHER = {Springer},
   ADDRESS = {New York},
      YEAR = {2005},
     PAGES = {xii+510},
      ISBN = {0-387-25115-4},
  MRNUMBER = {2161313},
       DOI = {10.1007/0-387-28861-9},
       URL = {https://doi.org/10.1007/0-387-28861-9},
}

@book {LovaszBook,
    AUTHOR = {Lov{\'a}sz, L{\'a}szl{\'o}},
     TITLE = {Large Networks and Graph Limits},
    SERIES = {American Mathematical Society Colloquium Publications},
    VOLUME = {60},
 PUBLISHER = {American Mathematical Society},
   ADDRESS = {Providence, RI},
      YEAR = {2012},
     PAGES = {xiv+475},
      ISBN = {978-0-8218-9085-1},
  MRNUMBER = {3012035},
       DOI = {10.1090/coll/060},
       URL = {https://doi.org/10.1090/coll/060},
}

@unpublished {LovaszProblems,
    AUTHOR = {Lov{\'a}sz, L{\'a}szl{\'o}},
     TITLE = {Graph homomorphisms: open problems},
      NOTE = {Manuscript, June 2008},
}

@article {LovaszSos,
    AUTHOR = {Lov{\'a}sz, L{\'a}szl{\'o} and S{\'o}s, Vera T.},
     TITLE = {Generalized quasirandom graphs},
   JOURNAL = {J. Combin. Theory Ser. B},
  FJOURNAL = {Journal of Combinatorial Theory. Series B},
    VOLUME = {98},
      YEAR = {2008},
    NUMBER = {1},
     PAGES = {146--163},
      ISSN = {0095-8956},
  MRNUMBER = {2368030},
       DOI = {10.1016/j.jctb.2007.06.005},
       URL = {https://doi.org/10.1016/j.jctb.2007.06.005},
}

@article {LovaszSzegedy2006,
    AUTHOR = {Lov{\'a}sz, L{\'a}szl{\'o} and Szegedy, Bal{\'a}zs},
     TITLE = {Limits of dense graph sequences},
   JOURNAL = {J. Combin. Theory Ser. B},
  FJOURNAL = {Journal of Combinatorial Theory. Series B},
    VOLUME = {96},
      YEAR = {2006},
    NUMBER = {6},
     PAGES = {933--957},
      ISSN = {0095-8956},
  MRNUMBER = {2274085},
       DOI = {10.1016/j.jctb.2006.05.002},
       URL = {https://doi.org/10.1016/j.jctb.2006.05.002},
}

@article {LovaszSzegedyFF,
    AUTHOR = {Lov{\'a}sz, L{\'a}szl{\'o} and Szegedy, Bal{\'a}zs},
     TITLE = {Finitely forcible graphons},
   JOURNAL = {J. Combin. Theory Ser. B},
  FJOURNAL = {Journal of Combinatorial Theory. Series B},
    VOLUME = {101},
      YEAR = {2011},
    NUMBER = {5},
     PAGES = {269--301},
      ISSN = {0095-8956},
  MRNUMBER = {2802882},
       DOI = {10.1016/j.jctb.2011.03.005},
       URL = {https://doi.org/10.1016/j.jctb.2011.03.005},
}

@article {ReiherSchacht,
    AUTHOR = {Reiher, Christian and Schacht, Mathias},
     TITLE = {Forcing quasirandomness with triangles},
   JOURNAL = {Forum Math. Sigma},
  FJOURNAL = {Forum of Mathematics. Sigma},
    VOLUME = {7},
      YEAR = {2019},
     PAGES = {Paper No. e9, 19},
      ISSN = {2050-5094},
  MRNUMBER = {3933904},
       DOI = {10.1017/fms.2019.7},
       URL = {https://doi.org/10.1017/fms.2019.7},
}

@unpublished {ChatterjeeBhattacharya,
    AUTHOR = {Chatterjee, Sayak and Bhattacharya, Bhaswar B.},
     TITLE = {Transitivity in inhomogeneous random tournaments},
      NOTE = {arXiv:2606.02340},
      YEAR = {2026},
       URL = {https://arxiv.org/abs/2606.02340},
}

@unpublished {KralEtAlRegularTournaments,
    AUTHOR = {Kr{\'a}l', Daniel and Krnc, Matja{\v z} and Ku{\v c}er{\'a}k, Filip and Lidick{\'y}, Bernard and Volec, Jan},
     TITLE = {Sidorenko property and forcing in regular tournaments},
      NOTE = {arXiv:2602.12551},
      YEAR = {2026},
       URL = {https://arxiv.org/abs/2602.12551},
}

@article {NoelRanganathanSimbaqueba,
    AUTHOR = {Noel, Jonathan A. and Ranganathan, Arjun and Simbaqueba, Lina M.},
     TITLE = {Forcing quasirandomness in a regular tournament},
   JOURNAL = {Innov. Graph Theory},
  FJOURNAL = {Innovations in Graph Theory},
    VOLUME = {3},
      YEAR = {2026},
     PAGES = {127--169},
      ISSN = {3050-743X},
       DOI = {10.5802/igt.20},
       URL = {https://doi.org/10.5802/igt.20},
}

@article {SimonovitsSosHereditary,
    AUTHOR = {Simonovits, Mikl{\'o}s and S{\'o}s, Vera T.},
     TITLE = {Hereditarily extended properties, quasi-random graphs and not necessarily induced subgraphs},
   JOURNAL = {Combinatorica},
  FJOURNAL = {Combinatorica. An International Journal on Combinatorics and the Theory of Computing},
    VOLUME = {17},
      YEAR = {1997},
    NUMBER = {4},
     PAGES = {577--596},
      ISSN = {0209-9683},
  MRNUMBER = {1645698},
       DOI = {10.1007/BF01195005},
       URL = {https://doi.org/10.1007/BF01195005},
}

@article {ThornbladTournamentLimits,
    AUTHOR = {Th{\"o}rnblad, Erik},
     TITLE = {Decomposition of tournament limits},
   JOURNAL = {European J. Combin.},
  FJOURNAL = {European Journal of Combinatorics},
    VOLUME = {67},
      YEAR = {2018},
     PAGES = {96--125},
      ISSN = {0195-6698},
  MRNUMBER = {3707221},
       DOI = {10.1016/j.ejc.2017.07.023},
       URL = {https://doi.org/10.1016/j.ejc.2017.07.023},
}

@article {ZhaoHypergraphLimits,
    AUTHOR = {Zhao, Yufei},
     TITLE = {Hypergraph limits: a regularity approach},
   JOURNAL = {Random Structures Algorithms},
  FJOURNAL = {Random Structures \& Algorithms},
    VOLUME = {47},
      YEAR = {2015},
    NUMBER = {2},
     PAGES = {205--226},
      ISSN = {1042-9832},
  MRNUMBER = {3382671},
       DOI = {10.1002/rsa.20537},
       URL = {https://doi.org/10.1002/rsa.20537},
}

\end{document}